\newtheorem{proposition}{Proposition}
\newtheorem{remark}{Remark}
\newtheorem{example}{Example}
\newtheorem{theorem}{Theorem}
\newtheorem{lemma}{Lemma}
\newtheorem{proof}{Proof}
\newcommand{\bfSigma}{\Sigma}
\begin{document}

\title{Sampling Plans for Control-Inspection Schemes Under Independent and Dependent Sampling Designs With Applications to Photovoltaics}

\author{Ansgar Steland \\ \ \\ 
Institute of Statistics \\ RWTH Aachen University, Germany\\
{\normalsize steland@stochastik.rwth-aachen.de}}

\maketitle

\abstract{The evaluation of produced items at the time of delivery is, in practice,
usually amended by at least one inspection at later time points. We extend the methodology of acceptance sampling for variables for arbitrary unknown distributions when additional sampling information is available to such settings. 
Based on appropriate approximations of the operating characteristic, we derive new acceptance sampling plans that control the overall operating characteristic. The results cover the case of independent sampling as well as the case of dependent sampling. In particular, we study a modified panel sampling design and the case of spatial batch sampling. The latter is advisable in photovoltaic field monitoring studies, since it allows to detect and analyze local clusters of degraded or damaged modules. Some finite sample properties are examined by a simulation study, focusing on the accuracy of estimation.}

{{\bf Keywords:} Acceptance sampling, dependence, highdimensional data, quality control, renewable energies, sampling design}

\section{Introduction}\label{t19:sec:1}

The acceptance sampling problem deals with the construction of sampling plans for  inspection, in order to decide, using a minimal number of measurements, whether a lot  (or shipment) of produced items should be accepted or rejected. Our approach is 
motivated by applications in photovoltaics, where the distribution of the relevant quality features, in particular the power output of solar panels, is typically non-normal, unknown and cannot be captured appropriately by parametric models.  However, in photovoltaics additional measurements from the production line are available and can be used to construct acceptance sampling plans. 

Let $ X $ represent a control measurement of produced item, with distribution function $ F $ with a finite fourth moment, e.g. the power output of a photovoltaic module. It is classified as non-conforming, defective or out-of-spec, if $ X \le \tau $, where $ \tau $ is a (one-sided) specification limit, usually defined as $ \tau = \mu^*(1-\varepsilon) $, where $ \mu^* $ is the target (or nominal) mean and $ \varepsilon \in (0,1) $ the tolerance. If there were no randomness, $ X = \mu $, where $\mu=E(X) $ is the true mean of $X$. Then items are non-conforming if $ \mu - \tau \le 0 $ and, clearly, we should reject the lot, if and only if $
\mu - \tau \le 0 $. But if the distribution of $X$ is not degenerated, it is reasonable to replace $ \mu $ by its unbiased canonical estimator $ \widehat{\mu} = \overline{X} $ to form a decision rule, and thus to reject the lot if $ \overline{X} - \tau \le c $, where
the critical value $ c> 0 $ accounts for the estimation error.

The  fraction of non-conforming (out-of-spec) modules (or items) corresponding to the above definition is then given by
\[
  p = P( X_1 \le \tau ) = F( \tau ).
\]
It is usually regarded as the quantity of interest in quality control, although it makes no assertion about how far away from the specification the non-conforming items are. However, it is worth mentioning that the fraction of non-conforming modules is directly related to the resulting costs, since it determines the number of modules one has to repair or replace in case of a total inspection. For these reasons, one aims at the determination of control procedures that allow to infer whether the fraction $p$, also called quality level, is acceptable, i.e. $ p < AQL $, or not, i.e. $ p > RQL $. Here 
$ 0 < AQL < RQL < 1 $ denote the \textit{acceptable qualilty limit} (AQL) and the \textit{rejectable quality limit} (RQL).

It is known that the probability of acceptance for the above rule based on $ \overline{X} - \tau $ is a function of the fraction defectives $p$, but it depends on the unknown distribution function $ F $. In photovoltaics and presumably other areas as well, we are given additional data from the production line that can be used to estimate unknowns.

In this paper, we construct sampling plans for the following situation: We assume that a control sample is drawn at the time of delivery of the modules, i.e., when the modules are new and unused, in order to ensure that shipments that are out-of-spec are identified and are not delivered to the customers. The sampling plan used at this first stage of the procedure is constructed using an additional sample from the production line, as discussed above. We further assume that shipments that passed the first-stage acceptance sampling procedure are inspected at a later time point, in order to check whether they are still in agreement with the quality requirements after some defined period of operation. At this second stage a further sample is taken, which is combined with the sample information from the first stage. This is done in order to avoid that a close decision in favor of acceptance, i.e. when the first stage control statistic has attained a value close to the critical value, results again in an acceptance due to the fact that the second stage sample size is relatively small.

At first glance, it seems that the approach is a double sampling procedure. Recall that the idea of double sampling plans is to give a questionable lot a second chance or, put differently, allow for quick acceptance of very good lots: If the test statistic (say, e.g. the number of defectives) is small, say smaller than $ a $, one accepts, if it is too large, say larger than $ b $, one rejects, and if it is between $a$ and $b$ one draws a second sample and bases the decision on the enlarged sample. Our approach is related in that we aim at re-using the sample information from the first-stage control sample, but in our approach the second sample is not taken at the same time instant, but at the inspection time, and the decision to accept or reject at the first stage is only based on the first sample. Indeed, we have in mind that there may be a substantial time lag between the two stages. 

The fact that in practice one prefers to take repeated measures at inspection time, i.e. of those items already selected, complicates the design of appropriate procedures, since now the samples at different time points are not independent. Thus, we extent the required theoretical results to the case of dependent sampling under quite general conditions. We propose a panel-based sampling scheme, where
the items selected at the first stage form the basis of the second stage sample, which is enlarged by new items if necessary. A further important sampling design is spatial batch sampling. Here the batches of observations may be correlated, for example since their spatial closeness implies that they carry the same factors that may affect quality measurements. We show that our results are general enough to apply to such a sampling design as well.

The rest of the paper is organized as follows. Section~\ref{t19:sec: Pre} discusses related work and applications.
Section~\ref{t19:sec: method} introduces the two-stage acceptance sampling framework, in particular the operating characteristic curves that define the statistical behaviour of our two-stage procedure, and discusses our model for the two-stage setting with a control sample, an inspection sample and an additional sample from the production line. In Section~\ref{t19:sec: approx}, we provide the asymptotic results that allow us to construct valid acceptance sampling plans that control the overall operating characteristic curve. Those results cover expansions of the control statistics, their joint asymptotic normality and approximations of the operating characteristics. We provide results for the case that the control sample and the inspection sample are independent as well as for the more general and realistic case that the samples are dependent. Computational issues
are discussed in Section~\ref{t19:sec: comp}. Lastly,
Section~\ref{t19:sec: sims} presents results from a simulation study.

\section{Preliminaries}\label{t19:sec: Pre}

\subsection{Related work}

The acceptance sampling problem dates back to the seminal contributions of Dodge and has been studied since then to some extent. For a general overview of classical procedures and their implementations in standards we refer to recent monograph \citet{SchillingNeubauer09}. For the Gaussian case, optimal plans have been constructed by \citet{LiebermannResnikoff1955}, see also \citet{BruhnSuhrKrumbholz1991},
 \citet{FeldmannKrumbholz2002}, where the latter paper studies double sampling plans for normal and exponential data, and the references given there.
Their lack of robustness with respect to departures from normality has been discussed in \citet{KoesslerLenz1997}. \citet{Koessler1999} used a Pareto-type tail approximation of the operating characteristic combined with maximum likelihood estimation, in order to estimate the fraction of defectives and then constructed sampling plans using the asymptotic distribution of that estimate, when the lot is accepted if the
estimated fraction of defectives is small enough. The methods works, if the tails are not too short. Since in industrial applications large production lots are usually classified in classes, the case of non-normal but compactly supported distributions 
deserves attention. For such distributions, approximations based on the asymptotic normality of sample means are a convenient and powerful tool for the construction of sampling plans, having in mind that $t$-type statistics are a natural choice to decide in favor of acceptance or rejection of a lot, as discussed above. Thus, recent works focused on $t$-type test statistics resembling the statistic used by the optimal procedure under normality.

Sampling plans for variables inspection when the underlying quality variable has an arbitrary continuous distribution with finite fourth moment and the related estimation
theory based on the sample quantile function of an additional sample has been studied in \citet{StelandZaehle2009} employing empirical process theory. For historic samples, i.e. samples having the same distribution as the control sample, a simplified proof using the Bahadur representation can be found in \citet{MeisenPepelyshevSteland2012}. In the present work, it is shown that this method of proof extends to the case of a difference in location between the additional sample and the control sample as studied in
\citet{StelandZaehle2009}. 
Further results and discussions on acceptance sampling for photovoltaic data and applications can be found in \citet{HerrmannStelandHerff10}. \citet{HerrmannSteland2010} have shown that the accuracy of such acceptance sampling plans using additional samples from the production line can be substantially improved by using smooth quantile estimators such as numerically inverted integrated  cross-validated kernel density estimators. The construction of procedures using the singular spectral analysis (SSA) approach with adaptively estimated parameters has been recently studied by \citet{GolyandinaPepelyshevSteland2012}. 
Bernstein-Durrmeyer polynomial estimators are well known as a general purpose approach to estimation that provides quite smooth estimators. The relevant theory as well as their application to the construction of acceptance sampling plans with one-sided specification limits has been investigated in \citet{PepelyshevRafajlowiczSteland2013}. 
The extension of the methodology to two-sided specification limits and numerical results focusing on photovoltaic applications can be found in \citet{AvellanPepelyshevSteland2013}.

\subsection{Applications in photovoltaics}

The quality control of photovoltaic systems has become a key application area for recent developments and extensions of the acceptance sampling methodology, although the results can certainly be adopted to many other areas in industry. 

The production of solar panels has become a highly complex high-throughput production process. Today's cell technologies rely on sophisticated solar cell designs. A solar cell can be regarded as a stack of thin layers, in order to trap as many photons as possible, transform them into electron-hole pairs and then ease the electrons' movement through the cell to the wires. Anti-reflective coatings of the glass covering have been introduced recently, in order to maximize the amount of sunlight trapped by the solar cells by channelling the photons to the lower layers of the cell. 
Optical filters are used in order to ensure that only those wavelengths pass that can be processed by the semiconductor to form electron-hole pairs. Let us briefly recall how a solar cell works:
The $p$-type silicon layer consists of silicon, which has four electrons, doped with a compound (such as Phosphorous) that contains one more valence electron, such that this layer is positively charged. The $n$-type silicon layer is silicon doped with compounds (such as Boron) having one less valence electron than silicon, such that only three electrons are available for binding with four adjacent silicon atoms. Thus, the $n$-type layer is negatively charged. An incomplete bond (hole) of the $n$-type layer can attract an electron to fill the hole, in which case the hole moves. At the $np$-junction where both layers meet, electrons from the $n$-type layer being freed by the photons' energy move to the $p$-layer and from there to the back contact, and the corresponding holes move to the contact grid at the top of the cell. This result in a current $I$. Combined with the internal electrical field of the cell due to the differently charged $p$- and $n$-silicon layers leads to power ($P=U I $), which can be used by an external load attached to the cell. 

Each layer of a cell makes use of specific physical and chemical properties of the base material and the added compounds. In a multijunction (tandem) design two cells are mechanically stacked on top of each other. The second cell at the bottom absorbs the higher energy photons not absorbed by the top cell. Such designs can increase
the efficiency  substantially. The physical and chemical interaction of those materials and particles is a complex dynamic process driven by the sun's irradiance, the associated heat and the weather conditions that may range from extreme cold to extreme heat.  Even in the absence of manufacturing faults, these facts cause serious changes of the physical and chemical properties due to ageing, and, as a consequence, also of the electrical properties of a solar cell, leading to what could be called degradation by design. 

Manufacturing faults add to those unavoidable sources of degradation. For example, even minor defects in the encapsulation of a PV module may result in leakage  after a couple of years, thus leading to internal corrosion and other effects that degrade or even destroy the module. Micro cracks in the crystalline semiconductor arising by improper handling of the modules at the production line, stress during transport to the site of construction or improper handling during assembly of the photovoltaic system, are invisible by eye but visible in electro luminescence (EL) images. Usually they have no effect on the electrical characteristics. However, it is conjectured that such micro cracks could have serious impact on
long-term degradation and result in failures after several years of operation. As a consequence, insurance companies routinely make EL images from insured lots or systems and are highly interested in the long-term influence of micro cracks.  Antireflective coatings have been shown experimentally to degrade after damp-heat tests leading to a loss of power. Driven by the high potential between the cell's surface and the ground, a likely source of potential induced degradation (PID) is the wandering of $Na^+ $ ions from the glass surface trough the cell to the $np$-junction, where they short-circuit the emitter. The fact that the emerging markets for photovoltaics are in countries such as India or Saudi Arabia, the degradation of the glass surface due to sand is an important issue for the reliability of PV systems.

As a consequence, there is a need for proper inspection plans that combine available information from the production line, quality assessments and audits at the time of delivery and construction of the solar systems and later inspections.

\section{Method}\label{t19:sec: method}

\subsection{Two-stage acceptance sampling}

To proceed, let us fix some notions more rigorously.
Let $ T_n $ be a statistic (decision function) depending on a sample $ X_1, \dots, X_n $ constructed in such a way that large values of $ \overline{X}_n - \tau $ indicate that the lot should be accepted. A pair $ (n,c)  \in \mathbb{N} \times [0,\infty) $ is called a {\em (acceptance) sampling plan}, if one draws a sample of $n$ observations and accepts the lot if $ \overline{X}_n - \tau > c $.
Then the probability that the lot is accepted,
\[
  OC(p) = P( T_n > c | p ), \qquad p \in [0,1],
\]
is called {\em operating characteristic}. Here $ P( \bullet | p ) $ indicates that the probability is calculated under the assumption that the true fraction of non-conforming items equals $p$.
Given specifications of the AQL and RQL and error probabilities $ \alpha $ and $ \beta $, a sampling plan is called {\em valid}, if 
\begin{equation}
\label{t19:AQLkrit}
  OC(p) \ge 1-\alpha, \qquad \text{for all $ p \le AQL $},
\end{equation}
and
\begin{equation}
\label{t19:RQLkrit}
  OC(p) \le \beta, \qquad \text{for all $ p \ge RQL $}.
\end{equation}

In this article, we consider two-stage acceptance procedures where a lot is examined at two time points. At time $ t_1 $, usually the time of production, delivery or construction of the system that uses the delivered items (PV modules), a control sample is taken, in order to decide whether the lot or shipment can be accepted. If the lot is rejected, we stop. If the lot is accepted, one proceeds and at time instant $ t_2 $ the system is inspected again. One applies a further acceptance sampling plan, based on a an inspection sample, in order to conclude whether the shipment is still in agreement with the specifications.

Let us denote the test statistic used at time $t_i $ using a sampling plan $ (n_i, c_i ) $ by $ T_{ni} = T_{n_i,i} $, $ i = 1, 2$. Notice that here and in what follows, with some abuse of usual notation, we indicate the dependence on $ n_i $ by $n$, in order to keep notation simple and clean; this will cause neither confusion nor conflict.

Then the corresponding operating characteristics are given by
\[
  OC_1(p) = P( T_{n1} > c_1 | p ), \qquad p \in [0,1],
\]
and, since the sampling plan $ (n_2, c_2) $ is constructed given $ T_{n1} > c_1 $,
\[
  OC_2(p) = P( T_{n2} > c_2 | T_{n1} > c_1, p), \qquad p \in [0,1].
\]
Since a lot is accepted if and only if it is accepted at stage $1$ and stage $2$, the 
{\em overall operating characteristic}, $OC(p) = P(\text{'lot acceptance'}|p) $, is given by
\begin{equation}
\label{t19:OverallOC}
  OC(p) = OC_1(p) OC_2(p), \qquad p \in [0,1].
\end{equation}
Of course, one may design the procedure such that at both stages the operating characteristics are valid for the same error probabilities. However, then one cannot
control the error probabilities of the overall procedure, since its operating characteristic is given by (\ref{t19:OverallOC}).

Thus, we propose to design the procedure by controlling the overall operating characteristics. This means, the stage-wise sampling plans are determined
in such a way that both of them ensure (\ref{t19:AQLkrit}) and (\ref{t19:RQLkrit}) for stage-specific error probabilities $ \alpha_1, \beta_1 $ and $ \alpha_2, \beta_2 $, i.e.
\begin{equation}
\label{t19:OCi1}
  OC_i(p) \ge 1 - \alpha_i, \qquad p \le AQL, \qquad (i=1,2),
\end{equation}
and
\begin{equation}
\label{t19:OCi2}
  OC_i(p) \le \beta_i, \qquad p \le RQL, \qquad (i=1,2).
\end{equation}
If (\ref{t19:OCi1}) and (\ref{t19:OCi2}) can be ensured with equality for $ p \in \{ AQL, RQL \} $, then we obtain
\[
  OC(AQL) = (1-\alpha_1)(1-\alpha_2), \qquad OC(RQL) = \beta_1 \beta_2
\]
for the overall OC curve. If we want that it represents an (overall) valid acceptance sampling plan, i.e.
\begin{equation}
\label{OCtwoPoint}
  OC(p) \ge 1-\alpha, \ p \le AQL, \quad OC(p) \le \beta, \ p \ge RQL,
\end{equation}
for given global error probabilities $ \alpha $ and $ \beta $,
we have to design the procedures at both stages appropriately, preferably such that (\ref{OCtwoPoint}) holds.
Treating the producer risk and the consumer risk symmetrically imposes the constraints
\[
  \alpha_1 = \beta_1, \qquad \alpha_2 = \beta_2,
\]
such that it remains to select $ \alpha_1 $ and $ \alpha_2 $ in such a way that
the resulting procedures guarantees a valid overall sampling plan.
For example, if one additionally imposes the constraint $ \alpha_1 = \alpha_2 $, one obtains a valid (overall) acceptance sampling plan, if $ (\alpha_1, \beta_1 ) $ is selected such that
$
  (1-\alpha_1)^2 = 1-\alpha, \beta_1^2 = \beta.
$
If now the global error probabilities $ \alpha $ and $ \beta $ are given and one puts $ \alpha_2 = \alpha_1 = 1 - \sqrt{1-\alpha} $ and $ \beta_1 = \beta_2 = \sqrt{\beta} $, then (\ref{OCtwoPoint})  holds with equalities, but the stage-wise consumer risks $ \beta_1, \beta_2 $ may be too high in practice -- observe that $ \sqrt{0.1} \approx 0.31623 $. Another approach is to use plans with $ \alpha_1 < \alpha_2 $, such that the procedure is, in terms of the producer risk, more restrictive at the first stage than at the second stage. In the simulations, we specified $ \alpha (= 0.1) $ and $ \alpha_1 $, determined the corresponding $ \alpha_2 $, i.e. $  \alpha_2 = 1 - \frac{1-\alpha}{1-\alpha_1},$ and put $ \beta_i = \alpha_i, i = 1, 2, $.
This means, at each stage producer and consumer risks are symmetric. As a consequence, the procedure will work on a small global consumer risk, since $ \beta = \beta_1 \beta_2 $ with $ \beta_1, \beta_2 \le \alpha_1 $. For example, the choices $ \alpha =0.1, \alpha_1 = 0.03 $ lead to $ \alpha_2 = \beta_2 \approx 0.072 $ such that $ \beta \approx 0.00216 $, yielding a valid acceptance sampling plan

\subsection{A two-stage procedure using additional data}

We assume that we are given an additional data set of size $m$, usually quite large, consisting of independent and identically distributed measurements, 
\[
  X_0, X_{01}, \dots, X_{0m} \stackrel{i.i.d.}{\sim} F_0,
\]
sampled at time $ t_0 < t_1 $, with 
\[
  \text{mean $ \mu_0 = E( X_0 ) $ and variance $ \sigma_0^2 = \text{Var}(X_0) $,}
\]
which can be used for the construction of the decision procedures. Recalling from our above discussion that those additional measurements usually represent historic data or are taken using a different measurement system, we allow for difference in location with respect to the independent and identically distributed control measurements,
\[
  X_1, X_{11}, \dots, X_{1n_1} \stackrel{i.i.d.}{\sim} F_1,
\]
taken at time instant $ t_1 $.  At the inspection time point $ t_2 $, we have additional measurements
\[
  X_2, X_{21}, \dots, X_{2n_2} \stackrel{i.i.d.}{\sim} F_2.
\]
Here we shall allow for a degradation effect leading to smaller measurements. 
Concretely, our distributional model relating the marginal distributions of the three samples is now as follows. 
\begin{equation}
  X_{ji} \sim
  \left\{
  \begin{array}{ll}
  F\left( \bullet - \Delta \right), & \qquad j = 0,\\
  F( \bullet ), & \qquad j = 1, \\
  F\left( \frac{ \bullet }{ d } \right), & \qquad j = 2.
  \end{array}
  \right.
\end{equation}
for constants $ \Delta \in \mathbb{R} $ and $ 0 < d < \infty $.
Equivalently, in terms of equality in distribution, 
\begin{align*}
  X_0 & \stackrel{d}{=} \Delta + X_1, \\
  X_1 & \sim F, \\
  X_2 & \stackrel{d}{=} d X_1. 
\end{align*}
The constant $d$ determines the degree of degradation (if $ d < 1 $) and is assumed to be known. We work with a simple degradation model, since in photovoltaics there is not yet enough knowledge about the degradation of photovoltaic modules, which would justify to go beyond the assumption that degradation acts like a damping factor on the power output measurement. We also assume that $d$ is known, since even the estimation of the mean  yearly degradation is a difficult practical problem and requires large data sets over long time periods, such it would be not realistic to estimate $d$ within our framework.

Further, we may and shall assume that 
\[
  F(x) =  G\left( \frac{ x - \mu }{\sigma} \right), \qquad x \in \mathbb{R},
\]
for some fixed but unknown d.f. $G$ with $ \int x dG(x) = 0 $ and $ \int x^2 d G(x) = 1 $, such that
\[
  \mu = E(X_1), \qquad \sigma^2 = \operatorname{Var}( X_1 ),
\]
are the mean and the variance of the control measurements taken at time $ t_1 $.

The two-stage acceptance sampling procedure to be studied is now as follows.
At stage $1$, i.e. at time $ t_1 $, based on a sampling plan $ (n_1, c_1) $ we accept the lot or shipment, if
\begin{equation}
\label{Decision1}
  T_{n1} = \sqrt{n_1} \frac{ \overline{X}_1 - \tau }{ S_m } > c_1,
\end{equation}
where 
$
  \overline{X}_1 = \frac{1}{n_1} \sum_{i=1}^{n_1} X_{1i}
$
is the average of the observations taken at time $ t_1 $ and
\[
  S_m = \sqrt{ \frac{1}{m-1} \sum_{i=1}^m (X_{0i} - \overline{X}_0)^2 }
\]
is the sample standard deviation calculated from the observation taken 
at time $ t_0 $. It is worth mentioning that standardizing with the sample standard deviation calculated from the time $ t_0 $ measurements is crucial; indeed, $ S_m $ can not be replace by, say, $ \widehat{\sigma}_1 = (n_1-1)^{-1} \sum_{j=1}^{n_i} (X_{1j} - \overline{X}_i)^2 $. 

If the lot is accepted at time $ t_1 $, we draw the additional
observations $ X_{21}, \dots, X_{2n_2} $ for inspection and calculate the
corresponding statistic
\[
  T_{n2} = \sqrt{n_2} \frac{ D \overline{X}_2 - \tau }{ S_m },
\]
where $ D = 1/d $ and $ \overline{X}_2 = \frac{1}{n_2} \sum_{i=1}^{n_2} X_{2i} $.
At inspection time $ t_2 $ the lot is accepted if
\begin{equation}
\label{Decision2}
  T_{n1} + T_{n2} > c_2.
\end{equation}
Notice that here we aggregate the available information by summing up $ T_{n1} $ and $ T_{n2} $. The rationale behind this rule is as follows. We reach the inspection time, only if
we passed the quality control at time $ t_1 $. The value of the statistic $ T_{n1} $ comprises the evidence in favor of acceptance and rejection, respectively. But even if the lot is accepted, the decision could be a close one, i.e. $ T_{n1} > c_ 1 $ but $ T_{n1} \approx c_1 $.  In such cases, the probability is relatively large that the lot is accepted at the inspection time again, if one drops the information already obtained at stage 1. Thus,  it makes sense to aggregate all available information to come to a decision, i.e. to take the sum of the statistics $ T_{n1} $ and $ T_{n2} $ and compare with a new critical value $c_2$.

\section{Approximations of the Operating Characteristics}\label{t19:sec: approx}

In order to calculate concrete acceptance sampling plans, we need to calculate the true operating characteristics, which is impossible without knowing the true underlying distributions. Thus, we shall derive appropriate approximations of the operating characteristics that will allow us to construct asymptotically optimal sampling plans.

Let us introduce the following notations. The standardized arithmetic averages will be denoted by
\[
  \overline{X}_i^* = \sqrt{n_i} \frac{ \overline{X}_i - \mu }{ \sigma }, \qquad i = 1, 2.
\]
Here and in what follows, we assume that $ D = 1 $, since otherwise one may replace the $ X_{2i} $ by $ D X_{2i} $.
It turns out that the asymptotically optimal acceptance sampling plans depend on the quantile function $ G^{-1}(p) $ of the standardized observations
\[ X_i^* = (X_i - \mu)/\sigma, \qquad i = 0, 1.\] 
Thus, we shall assume that we have an {\em arbitrary} consistent quantile estimator $ G_m^{-1}(p) $ of $ G^{-1}(p) $ at our disposal. It will be calculated from the additional sample taken at time $ t_0 $. We only need the following regularity assumption.

\textbf{Assumption Q:} One of the following two conditions holds.
\begin{itemize}
\item[(i)] $ G_m^{-1}(p) $ is a quantile estimator of the quantile function $ G^{-1}(p) $
of the standardized measurements satisfying the central limit theorem
\[
  \sqrt{m}( G_m^{-1}(p) - G^{-1}(p) ) \stackrel{d}{\to} V,
\]
as $ m \to \infty $, for some random variable $V$.
\item[(ii)] $ F_m^{-1}(p) $ is a quantile estimator of the quantile function $ F_0^{-1}(p) $ of the measurements taken at time $ t_0 $, satisfying the central limit theorem
\[
  \sqrt{m}( F_m^{-1}(p) - F_0^{-1}(p)) \stackrel{d}{\to} U,
\]
as $ m\to \infty $, for some random variable $U$.
\end{itemize}

\begin{remark}
\label{t19:RemarkQuantEst}
Notice that under condition (ii) of Assumption Q, one may construct a quantile estimator for $ G^{-1}(p) $ by
\[
  \widetilde{G}_m^{-1}(p) :=  \frac{ F_m^{-1}(p) - \mu_0 }{\sigma_0},
\]
if $ \mu_0 $ and $ \sigma_0 $ are known, since the quantiles of $F $ and $G$ are related by \[ F^{-1}(p) = \mu + \sigma G^{-1}(p). \] 
If $ \mu $ and $ \sigma $ are unknown, one should take the estimator
\begin{equation}
\label{t19:DefGM}
  G_m^{-1}(p) := \frac{F_m^{-1}(p) - \overline{X}_0}{S_m},
\end{equation}
where $ (\overline{X}_0,S_m) $ consistently estimates $ (\mu_0, \sigma_0) $ under our assumption of an i.i.d. sample $ X_{01}, \dots, X_{0m} $ a with finite second moment.
\end{remark}

Let us consider two examples.

\begin{example} A natural candidate for $ F_m^{-1}(p) $ is the corresponding sample quantile, 
\[ 
  \widetilde{F}_m^{-1}(p) = X_{0,( \lceil m p \rceil )}, \qquad p \in (0,1),
\]
where $ X_{0,(1)} \le \cdots \le X_{0,(m)} $ is the order statistic associated to $ X_{01}, \dots, X_{0m} $. However, it is known that the sample quantiles perform poorly for the type of acceptance sampling plans to be studied here, see \citet{MeisenPepelyshevSteland2012}.
\end{example}

\begin{example} Suppose that the distribution of the measurements is concentrated on a finite interval $ [a,b] $ that can be assumed to be $ [0,1] $. The Bernstein-Durrmeyer polynomial estimator of degree $N \in \mathbb{N} $ for $ F_0^{-1}(p) $ is then defined as 
\[
  F_{m,N}^{-1}(p) = (N+1) \sum_{i=0}^N a_i B_i^{(N)}(p), \qquad p \in (0,1),
\]
with coefficients
\[
  a_i = \int_0^1 \widetilde{F}_m^{-1}(u) B_i^N(u) \, du, \quad
\text{where} \quad
  B_i^{(N)}(x) =  {N \choose i} x^i (1-x)^{N-i}
\]
for $ i = 0, \dots, N $ are the Bernstein polynomials. In \citet{PepelyshevRafajlowiczSteland2013} it has been shown that $ \widehat{F}_{m,N}^{-1}(p) $ is consistent in the MSE and MISE sense and almost attains the optimal parametric rate of convergence if $ F_0^{-1} $ is smooth. The degree $N$ can be chosen in a data-adaptive way by controlling the number of modes of the density associated to the estimator as well as the closeness of the associated estimator of the distribution function $ \widehat{F}_{m,N}(x) $ in the sense that the maximal distance between $ \widehat{F}_{m,N}( \widehat{F}_{m,\widehat{N}}^{-1}(x) ) $ and the identity function $ i(x) = x $ is uniformly less or equal to $ 1/R_m $ where 
$ R_m = 2 \sqrt{m} / \sqrt{2 \log \log m} $; for details of the algorithm leading to the estimate $ \widehat{N} $ we refer to \citet{PepelyshevRafajlowiczSteland2013}. For the resulting estimate $ \widehat{F}_{m,\widehat{N}}^{-1}(p) $ an uniform error bound can be established,
\[
  \sup_q | \widehat{F}_{m,\widehat{N}}^{-1}(q) - F_0^{-1}(q) | \le 
  2 \sqrt{2 \log \log m} / \sqrt{m}, 
\]
see \citet[Theorem~3.1]{PepelyshevRafajlowiczSteland2013}. 
\end{example}

\begin{example} From previous studies it is known that quantile estimators obtained by (numerically) inverting a kernel density estimator 
\[ 
  \widehat{f}_m(x) = \frac{1}{m h} \sum_{i=1}^n K_h( x - X_{0i} ), \qquad x \in \mathbb{R},
\] 
provide accurate results for sampling plan estimation, see \cite{HerrmannSteland2010}. Here $K( \bullet ) $ is a regular kernel, usually chosen as a density with mean $0$ and unit variance, $ h > 0 $ the bandwidth and $ K_h(z) = K(z/h)/h $, $ z \in \mathbb{R} $, its rescaled version. The associated quantile estimator is obtained by solving for given $ p \in (0,1) $ the nonlinear equation
\[
  F_m( x_p ) = \int_{-\infty}^{x_p} \widehat{f}_m(x) \, dx \stackrel{!}{=} p.
\]
For a kernel density estimator one has to select the bandwidth $h$. If the resulting estimate is consistent for $ f(x) $ for each $ x \in \mathbb{R} $, which requires
to select $h = h_n $ such that $ h \to 0 $ and $ nh \to \infty $,
it follows that the  corresponding estimator of the distribution function,
$ F_m(x) = \int_{-\infty}^x \widehat{f}_m(u) \, du $, $ x \in \mathbb{R} $, is consistent as well, see \citet{Glick1974}, if the kernel used for smoothing is a density function.
\end{example}

In many cases, the central limit theorem for a quantile estimator, say, $ \widehat{Q}_m(p) $, of a quantile function $ F^{-1}(p) $ can be strengthened to a functional version that considers the scaled difference $ \sqrt{m}(\widehat{Q}_m(p)  - F^{-1}(p) ) $ as a function of $ p $.

\textbf{Assumption Q':} Assume that $ F_m^{-1}(p) $ is a quantile estimator of the quantile function $ F_0^{-1}( p ) $, $ 0 < p < 1 $, satisfying
\[
  \sqrt{m}( F_m^{-1}(p) - F_0^{-1}(p) ) \stackrel{d}{\to} \mathcal{F}(p),
\]
as $ m \to \infty $, for some random process $ \{ \mathcal{F}(p) : 0 < p < 1 \} $,
in the sense of weak convergence of such stochastic processes indexed by the unit interval. 

Assumption Q' holds true for the sample quantiles $ F_m^{-1}(p) $ as well as, for example, the Bernstein-Durrmeyer estimator, if the underlying distribution function attains a positive density. For the latter results, further details and discussion, we refer to \citet{PepelyshevRafajlowiczSteland2013}. 


Having defined the decision rules for acceptance and rejection at both stages
by (\ref{Decision1}) and (\ref{Decision2}), the overall OC curve is obviously given by
\[
  OC(p) = P( T_{n1} > c_1, T_{n1} + T_{n2} > c_2 | p ), \qquad p \in [0,1].
\]
Further, the operating characteristic of stage 2 is a conditional one and given by
\[
  OC_2(p) = P( T_{n1} + T_{n2} > c_2 | T_{n1} > c_1, p ),  \qquad p \in [0,1]. 
\]
Those probabilities cannot be calculated explicitly under the general assumptions of this paper. Hence, we need appropriate approximations in order to construct valid sampling plans. 

\subsection{Independent Sampling}

The case of independent sampling is, of course, of some relevance. In particular, it covers the case of destructive testing or, more generally, testing methods that may change the properties. Examples are accelerated heat-damp tests of PV modules. 
Let us assume that the samples $ X_{i1}, \dots, X_{in_i} $, $ i =1, 2 $, are independent.
Our OC curve approximations are based on the following result, which provides expansions of the test statistics involving the quantile estimates $ G_m^{-1}(p) $.

\begin{proposition} 
\label{t19:ExpansionIndependent}
Under independent sampling, we have
\[
  T_{n1} = \overline{X}_1^* - \sqrt{n_1} G_m^{-1}(p) + o_P(1),
\]
as $ n_1 \to \infty $ and $ m/n_1 = o(1) $, and
\[
 T_{n2} = \overline{X}_2^* - \sqrt{n_2} G_m^{-1}(p) + o_P(1),
\]
as $ n_2 \to \infty $ and $ m/n_2 = o(1) $. If a quantile estimator
$ F_m^{-1}(p) $ of $ F_0^{-1}(p) $ is available, both expansions
still hold true with $ G_m^{-1}(p) $ defined by (\ref{t19:DefGM}).
\end{proposition}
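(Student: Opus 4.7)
The plan is to rewrite each studentized statistic in a form that isolates the standardized sample mean from a term containing the true quantile $G^{-1}(p)$, and then to replace the true quantile by the plug-in estimator $G_m^{-1}(p)$ at a cost of $o_P(1)$.

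First, I would exploit the identity $p = F(\tau) = G((\tau - \mu)/\sigma)$, which yields $\tau = \mu + \sigma G^{-1}(p)$. Substituting this into the definition (\ref{Decision1}) of $T_{n1}$ and adding and subtracting $\mu$ in the numerator gives
\[
  T_{n1} \;=\; \frac{\sigma}{S_m}\,\overline{X}_1^{\,*} \;-\; \sqrt{n_1}\,\frac{\sigma}{S_m}\,G^{-1}(p).
\]
Because the model posits $X_0 \stackrel{d}{=} \Delta + X_1$, the additional sample inherits the variance $\sigma^2$ of the control sample, so the strong law gives $S_m/\sigma \to 1$ almost surely. Combined with $\overline{X}_1^{\,*} = O_P(1)$ from the CLT, Slutsky's theorem absorbs $\sigma/S_m - 1$ into an $o_P(1)$ term and reduces the first summand to $\overline{X}_1^{\,*} + o_P(1)$.

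Next I would rewrite the second summand so that it aligns with the estimator $G_m^{-1}(p)$ from (\ref{t19:DefGM}). The shift-by-$\Delta$ structure of the model gives $F_0^{-1}(p) = \mu_0 + \sigma G^{-1}(p)$, so $\sigma G^{-1}(p) = F_0^{-1}(p) - \mu_0$, while by construction $S_m G_m^{-1}(p) = F_m^{-1}(p) - \overline{X}_0$. Subtracting these two representations yields
\[
  \sqrt{n_1}\,\frac{\sigma}{S_m}\,G^{-1}(p) \;-\; \sqrt{n_1}\,G_m^{-1}(p)
  \;=\; \frac{\sqrt{n_1}}{S_m}\Bigl[\bigl(F_0^{-1}(p) - F_m^{-1}(p)\bigr) + \bigl(\overline{X}_0 - \mu_0\bigr)\Bigr].
\]
By Assumption Q(ii), $F_m^{-1}(p) - F_0^{-1}(p) = O_P(m^{-1/2})$, and the CLT for the i.i.d.\ sample $X_{01},\dots,X_{0m}$ of finite second moment gives $\overline{X}_0 - \mu_0 = O_P(m^{-1/2})$. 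Together with $S_m^{-1} = O_P(1)$, the whole remainder is $O_P(\sqrt{n_1/m})$, which is $o_P(1)$ under the stated joint growth condition on $n_1$ and $m$. If instead Assumption Q(i) is posited directly, the step is shorter, since $G_m^{-1}(p) - G^{-1}(p) = O_P(m^{-1/2})$ is already available without the explicit $\Delta$/$\mu_0$ bookkeeping.

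The argument for $T_{n2}$ is formally identical: with $D = 1$ we have $X_2 \stackrel{d}{=} X_1 \sim F$, so the same identity $\tau = \mu + \sigma G^{-1}(p)$ applies and replacing $n_1$ by $n_2$ throughout yields the analogous expansion; independence of the two samples is not actually required at this stage but matches the section's setting. I expect the main obstacle to be the careful bookkeeping around the shift $\Delta$ and the scale $\sigma_0$: one has to verify that the mismatch between $F$ and $F_0$ cancels through the centring by $\overline{X}_0$ and scaling by $S_m$ built into (\ref{t19:DefGM}), so that $G_m^{-1}(p)$ actually estimates the quantile of the \emph{standardized} distribution $G$, and not of $F_0$, even though it is constructed from the shifted sample at time $t_0$.
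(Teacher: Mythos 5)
Your proposal is correct and follows essentially the same route as the paper, which proves this proposition as a special case inside the proof of Theorem~\ref{t19:ExpansionDependent1}: substitute $\tau = F^{-1}(p) = \mu + \sigma G^{-1}(p)$, split off the standardized mean, absorb the $\sigma/S_m - 1$ factor via consistency of $S_m$, and control the quantile-estimator error $F_m^{-1}(p)-F_0^{-1}(p)$ and the centring error $\overline{X}_0 - \mu_0$, each $O_P(m^{-1/2})$, so that the total remainder is $O_P(\sqrt{n_1/m})$; your exact algebraic identity $\sigma G^{-1}(p) - S_m G_m^{-1}(p) = (F_0^{-1}(p)-F_m^{-1}(p)) + (\overline{X}_0 - \mu_0)$ packages the paper's three separate remainder terms into one, which is a minor (and clean) reorganization rather than a different argument. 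One remark: you correctly require $n_1/m = o(1)$ for the remainder to vanish, whereas the proposition as printed says $m/n_1 = o(1)$; this is evidently a typo in the statement, since the paper's own proof and the companion Theorem~\ref{t19:ExpansionDependent1} both impose $\max(n_1,n_2)/m = o(1)$.
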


In what follows, $ \Phi(x) $ denotes the distribution function of the
standard normal distribution. We obtain the following approximation of the overall OC curve. The approximation holds in the following sense: 
We say $ A $ approximates $ A_n $ and  write  $ A_n \approx A $, if $ A_n = A + o_P(1) $, as $ \min(n_1,n_2) \to \infty $.

\begin{theorem} 
\label{t19:ApproxOCIndependent}
Under independent sampling and Assumption Q we have
\[
  OC_2( p ) \approx
  \frac{1}{\sqrt{2\pi}} 
  \frac{ \int_{c_1+\sqrt{n_1}G_m^{-1}(p) } 
  \left[ 1 - \Phi( c_2 - z + (\sqrt{n_1} + \sqrt{n_2}) G_m^{-1}(p) ) \right] e^{-z^2/2} \, dz }
  { 1- \Phi( c_1 + \sqrt{n_1} G_m^{-1}(p) ) },
\]
for any fixed $ p \in (0,1) $.
\end{theorem}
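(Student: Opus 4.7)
The plan is to exploit the independence of the samples taken at the three time points $t_0, t_1, t_2$, combined with Proposition~\ref{t19:ExpansionIndependent}, which already reduces the test statistics to functionals of the standardized sample means plus a common random shift $G_m^{-1}(p)$. Since the $t_0$-sample (which alone drives $G_m^{-1}(p)$) is independent of the samples taken at $t_1$ and $t_2$, the natural route is to work conditionally on the $t_0$-sample, freezing $G_m^{-1}(p)$ to a deterministic value $g$ inside the conditional probability, and then unconditioning at the end.

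Formally, I would first write
\[
  OC_2(p) = \frac{ P(T_{n1} > c_1,\; T_{n1} + T_{n2} > c_2 \mid p) }{ P(T_{n1} > c_1 \mid p) }
\]
and approximate numerator and denominator separately; the ratio preserves the $o_P(1)$ accuracy because, for fixed $p$, the denominator stays bounded away from $0$ by consistency of $G_m^{-1}(p)$ and finiteness of $c_1$. Substituting the expansions from Proposition~\ref{t19:ExpansionIndependent} gives the equivalent events $\{\overline{X}_1^* > c_1 + \sqrt{n_1}\, g + o_P(1)\}$ and $\{\overline{X}_1^* + \overline{X}_2^* > c_2 + (\sqrt{n_1} + \sqrt{n_2})\, g + o_P(1)\}$. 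Conditionally on the $t_0$-sample, the pair $(\overline{X}_1^*, \overline{X}_2^*)$ consists of two independent standardized means, each asymptotically $N(0,1)$ by the classical CLT; conditioning on $\overline{X}_1^* = z$ and integrating therefore yields the joint Gaussian probability
\[
  \int_{c_1 + \sqrt{n_1}\, g}^{\infty} \bigl[ 1 - \Phi\bigl(c_2 + (\sqrt{n_1} + \sqrt{n_2})\, g - z\bigr) \bigr] \frac{e^{-z^2/2}}{\sqrt{2\pi}}\, dz,
\]
while the denominator reduces to $1 - \Phi(c_1 + \sqrt{n_1}\, g)$. Reinserting $g = G_m^{-1}(p)$ and taking the ratio produces exactly the expression stated in Theorem~\ref{t19:ApproxOCIndependent}.

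The main obstacle is controlling the $o_P(1)$ remainders \emph{uniformly} in $g$: the critical levels $c_1 + \sqrt{n_1}\, g$ and $c_2 + (\sqrt{n_1} + \sqrt{n_2})\, g$ drift at the rate $\sqrt{n_i}$, so a naive Slutsky argument does not immediately justify absorbing the residual errors inside the tail probabilities of the limiting Gaussian. The clean resolution is to strengthen the CLT for $\overline{X}_i^*$ to a Berry--Esseen-type bound, which is legitimate under the finite fourth moment assumption on $F$, apply a Slutsky-style argument inside the conditional probability where $g$ is frozen, and finally take expectation with respect to the $t_0$-sample using the consistency $G_m^{-1}(p) \to G^{-1}(p)$ granted by Assumption~Q. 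A secondary bookkeeping point is that the $o_P(1)$ terms in Proposition~\ref{t19:ExpansionIndependent} are unconditional statements, so one must verify that they remain $o_P(1)$ after conditioning on a set of $t_0$-outcomes of probability approaching one, which is routine.
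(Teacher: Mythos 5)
Your proposal follows the same skeleton as the paper's proof: write $OC_2(p)$ as the ratio $P(T_{n1}>c_1,\,T_{n1}+T_{n2}>c_2\mid p)/P(T_{n1}>c_1\mid p)$, substitute the expansions $T_{ni}=\overline{X}_i^*-\sqrt{n_i}\,G_m^{-1}(p)+o_P(1)$, exploit the independence of the $t_0$-sample from the other two samples by conditioning on it, and reduce the numerator to a Gaussian integral by conditioning on the value of $\overline{X}_1^*$. Where you differ is in the device used to pass to the limit: the paper invokes the Skorohod/Dudley/Wichura almost-sure representation, so that $(\overline{X}_1^*,\overline{X}_2^*)$ is replaced by an exactly bivariate standard normal pair $(Z_1,Z_2)$ up to a.s.\ $o(1)$ errors, which are then absorbed using the Lipschitz continuity of $\Phi$ and continuity of the integral; you propose instead a Berry--Esseen bound (uniform in the threshold) combined with a Slutsky argument inside the conditional probability. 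Both are workable; the Berry--Esseen route is quantitative and makes the uniformity-in-$g$ issue explicit, which the a.s.-representation argument quietly sidesteps. One caveat: your claim that ``the denominator stays bounded away from $0$'' is not correct for arbitrary fixed $p$ --- since $c_1+\sqrt{n_1}\,G_m^{-1}(p)$ drifts to $\pm\infty$ at rate $\sqrt{n_1}$ unless $G^{-1}(p)=0$, the denominator tends to $0$ or to $1$ according to the sign of $G^{-1}(p)$. For the lower-tail values of $p$ relevant to sampling plans ($G^{-1}(p)<0$) it tends to $1$ and your ratio argument goes through; in the opposite regime the relative-error control is genuinely delicate, a point the paper's own proof also glosses over, so this does not count against you relative to the published argument.
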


\subsection{Dependent Sampling}

If it is not necessary to rely on independent samples for quality control at time $ t_0 $ and inspection at time $ t_1 $, i.e. to test different modules at inspection, it is better to take the same modules. This means, one should rely on a panel design, where at time
$ t_0 $ or $ t_1 $ a random sample from the lot is drawn, the so-called panel, and that panel is also  analyzed at the time of inspection, i.e. the modules are remeasured. 
To simplify the technical proofs, we shall assume in the sequel that the panel is
established at time $ t_1 $ and that the sample taken at time $ t_0 $ is independent from the observations taken at later time points.

The control-inspection scheme studied in this paper aims at the minimization of the costs of testing by aggregating available information. Therefore, the inspection sample should be (and will be) considerably smaller than the first stage sample, i.e. $ n_2 << n_1 $, although it also may happen occasionally that $ n_2 > n_1 $, since the sample sizes are random.

In order to deal with this issue, the following dependent sampling scheme is proposed.
 If $ n_2 < n_1 $, one draws a subsample of size $ n_2 $ from the items drawn at time $ t_1 $ to obtain the control sample of size $ n_2 $. Those $n_2 $ items are remeasured at time instant $t_2 $ yielding the sample $ X_{21}, \dots, X_{2n_2} $. Notice that for fixed $i$ the measurements $ X_{1i} $ and $ X_{2i} $ are dependent, since they are obtained from the same item (module). Thus, we are given a paired sample
\[
  (X_{1i}, X_{2i}), \qquad i = 1, \dots, n_2,
\]
which has to be taken into account. 

It remains to discuss how to proceed, if  $ n_2 > n_1 $. Then one remeasures all $ n_1 $ items already sampled at time $ t_1 $ yielding $ n_1 $ paired observations $ (X_{1i}, X_{2i}) $, $ i = 1, \dots, n_1 $, and draws $ n_2-n_1 $ additional items from the lot. 

As a consequence, $ n_1 $ observations from the stage 2 sample are stochastically dependent from the stage 1 observations, whereas the others are independent. 
In order to proceed, let us assume that the sample sizes $ n_1 $ and $ n_2 $ satisfy
\begin{equation}
\label{RatioCondition}
  \lim \frac{n_1}{n_2} = \lambda.
\end{equation}
Notice that
\begin{align*}
    \operatorname{Cov}( \overline{X}_1^*, \overline{X}_2^* )
    & = \frac{1}{\sqrt{n_1n_2}} \sum_{i=1}^{n_1} \sum_{j=1}^{n_2} \operatorname{Cor}( X_{1i}, X_{2j} ) 
    = \sqrt{ \frac{n_1}{n_2} } \rho',
\end{align*}
where
\[
  \rho' = \operatorname{Cor}( X_1, X_2 ) \not = 0.
\]
Thus, if $ \rho' \not= 0 $, the approximation results of the previous subsection are no longer valid, since even asymptotically $ \overline{X}_1 $ and $ \overline{X}_2 $ are correlated, and thus the standardized versions are correlated as well under this condition.

The following results provide the extensions required to handle this case of dependent sampling. Proposition~\ref{t19:CLTDependent1} provides the asymptotic normality of the sample averages, which share the possibly non-trivial covariance. 

\begin{proposition}
\label{t19:CLTDependent1} Suppose that the above sampling scheme at stages 1 and 2 is applied and assume that one of the following assumptions is satisfied.
\begin{itemize}
  \item[(i)]  $ X_{01}, \dots, X_{0m} $ is an i.i.d. sample with common distribution function $ F_0(x) = F(x-\Delta) $ and Assumption Q holds.
  \item[(ii)]  Assumption Q' is satisfied.
  \end{itemize}
  Then we have
  \[ 
    \left( 
      \begin{array}{cc} \overline{X}_1^* \\ \overline{X}_2^* \end{array}
    \right)
    \stackrel{d}{\to} N\left( \left(\begin{array}{cc} 0 \\ 0 \end{array}\right), \bfSigma \right),
  \]
  as $ \min(n_1, n_2) \to \infty $ with $ n_1/n_2 \to \lambda $, where the asymptotic
  covariance matrix is given by
  \[
    \bfSigma = \left( \begin{array}{cc} 1 & \rho \\ \rho & 1 \end{array} \right),
  \]
  with $ \rho = \sqrt{\lambda} \operatorname{Cor}( X_1, X_2 ) $.
\end{proposition}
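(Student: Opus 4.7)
The plan is to reduce the bivariate CLT to a univariate one via the Cramér--Wold device, exploiting the fact that the panel design splits each standardised sample average into a ``paired'' block of i.i.d.\ couples and an independent ``unpaired'' block of i.i.d.\ scalars.

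Fix $a,b\in\mathbb{R}$ and set $S_n = a\overline{X}_1^* + b\overline{X}_2^*$. I would first treat the case $n_1\le n_2$, the complementary case $n_1>n_2$ being symmetric after interchanging the roles of the two samples. After relabelling, the panel consists of the items $1,\dots,n_1$, so that the pairs $(X_{1i},X_{2i})$, $i=1,\dots,n_1$, are i.i.d., while $X_{2,n_1+1},\dots,X_{2,n_2}$ are i.i.d.\ and independent of the paired block. Writing $Y_i = (X_{1i}-\mu)/\sigma$ and $Z_i = (X_{2i}-\mu)/\sigma$, and using that $D=1$ forces $\operatorname{Var}(X_2)=\sigma^2$, one obtains the decomposition
\[
  S_n \;=\; \sum_{i=1}^{n_1}\xi_{ni} + \sum_{i=n_1+1}^{n_2}\eta_{ni},\qquad
  \xi_{ni} = \frac{a}{\sqrt{n_1}}Y_i + \frac{b}{\sqrt{n_2}}Z_i,\qquad
  \eta_{ni} = \frac{b}{\sqrt{n_2}}Z_i,
\]
in which the two triangular arrays are independent of each other and row-wise i.i.d.

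A direct computation gives $\operatorname{Var}(\xi_{ni}) = a^2/n_1 + b^2/n_2 + 2ab\rho'/\sqrt{n_1 n_2}$ with $\rho'=\operatorname{Cor}(X_1,X_2)$ and $\operatorname{Var}(\eta_{ni}) = b^2/n_2$; summing and invoking \eqref{RatioCondition} yields $\operatorname{Var}(S_n) \to a^2+b^2+2ab\rho$ with $\rho = \sqrt{\lambda}\,\rho'$. The Lindeberg condition for each block is immediate since the summands are $O_P((n_1\wedge n_2)^{-1/2})$ uniformly in $i$, and the finite fourth moment of $F$ assumed at the outset of the paper is far more than required. Applying the Lindeberg--Feller CLT to each independent block and adding the two centred Gaussian limits gives $S_n \stackrel{d}{\to} N(0,a^2+2ab\rho+b^2)$; Cramér--Wold then delivers the claimed joint normality with covariance matrix $\bfSigma$.

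Assumption Q (respectively Q') plays only a background role in this statement, since the conclusion concerns just the sample averages, which require no more than a finite second moment of $F$. These hypotheses are retained because, combined with the standing independence of the $t_0$-sample from the later-time samples, they allow the present CLT to be lifted in the sequel to joint convergence with the quantile-based standardisation that enters $T_{n1}$ and $T_{n2}$. I expect the only real obstacle to be keeping the bookkeeping of the paired and unpaired blocks uniform across the two sampling scenarios $n_1\le n_2$ and $n_1>n_2$, so that the final covariance formula is recovered identically; the underlying probabilistic content is a routine application of Lindeberg--Feller to two independent rows.
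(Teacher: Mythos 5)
Your proof is correct and follows essentially the same route as the paper: a Cram\'er--Wold reduction combined with splitting the linear combination into the paired (remeasured) block and the independent unpaired block, computing the limiting variance $a^2+b^2+2ab\sqrt{\lambda}\,\rho'$, and applying a CLT to each block before recombining by independence. The only cosmetic differences are that you spell out the Lindeberg condition, which the paper leaves implicit, and that you note the symmetry of the case $n_1>n_2$, which the paper also treats only by reduction to $n_1<n_2$.
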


The following theorem now establishes expansions of the test statistics, which hold jointly.

\begin{theorem} 
\label{t19:ExpansionDependent1}
Suppose that the above sampling scheme at stages 1 and 2 is applied and assume that one of the following assumptions is satisfied.
\begin{itemize}
  \item[(i)]  $ X_{01}, \dots, X_{0m} $ is an i.i.d. sample with common distribution function $ F_0(x) = F(x-\Delta) $  and Assumption Q holds.
  \item[(ii)]  Assumption Q' is satisfied.
  \end{itemize}
  Then we have
\[
  \left( \begin{array}{cc} T_{n1} \\ T_{n2} \end{array} \right)
  =
    \left( 
      \begin{array}{cc} \overline{X}_1^* \\ \overline{X}_2^* \end{array}
    \right)
  -
  \left( \begin{array}{cc} \sqrt{n_1} G_m^{-1}(p) \\ \sqrt{n_2} G_m^{-1}(p) \end{array} \right)
  + o_P(1),
\]
as $ \min(n_1,n_2) \to \infty $ with $ n_1/n_2 \to \infty $ and $ \max(n_1,n_2)/m = o(1) $.
\end{theorem}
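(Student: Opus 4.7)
The plan is to reduce the dependent case to essentially the same algebraic expansion used for Proposition~\ref{t19:ExpansionIndependent}, treating each coordinate separately and then stacking. The key observation is that the expansion itself is a coordinatewise identity driven only by the structure of each individual statistic; the dependence between the stage 1 and stage 2 samples does not enter at this level (it only becomes relevant for the joint limit in Proposition~\ref{t19:CLTDependent1}). What is new compared to the independent case is only that we must make sure the residuals remain $o_P(1)$ despite $\overline{X}_1^\ast$ and $\overline{X}_2^\ast$ being correlated.

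\textbf{Step 1 (algebraic rewriting).} Since $p=F(\tau)$ and $F(x)=G((x-\mu)/\sigma)$, we have $\tau=F^{-1}(p)=\mu+\sigma G^{-1}(p)$. Using $\sqrt{n_i}(\overline{X}_i-\mu)=\sigma\,\overline{X}_i^\ast$, together with the convention $D=1$ for the stage 2 sample, this yields the clean identity
\[
T_{ni}\;=\;\frac{\sigma}{S_m}\bigl(\overline{X}_i^\ast-\sqrt{n_i}\,G^{-1}(p)\bigr),\qquad i=1,2.
\]

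\textbf{Step 2 (auxiliary rates).} I would first show that $S_m=\sigma+O_P(m^{-1/2})$. This follows from the standard CLT for the sample variance, using that $X_{01},\dots,X_{0m}$ is i.i.d.\ with finite fourth moment and that $F_0(x)=F(x-\Delta)$ gives $\sigma_0=\sigma$. Hence $\sigma/S_m=1+O_P(m^{-1/2})$. Next, I would establish $G_m^{-1}(p)=G^{-1}(p)+O_P(m^{-1/2})$. Under (i) this is immediate from Assumption Q. Under (ii) one uses the Remark~\ref{t19:RemarkQuantEst} plug-in $G_m^{-1}(p)=(F_m^{-1}(p)-\overline{X}_0)/S_m$, together with Assumption Q' for $F_m^{-1}(p)$ and the CLTs for $\overline{X}_0$ and $S_m$; a routine delta-method calculation combines these into the required $O_P(m^{-1/2})$ bound.

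\textbf{Step 3 (bounding the residual).} Writing
\[
T_{ni}-\bigl(\overline{X}_i^\ast-\sqrt{n_i}\,G_m^{-1}(p)\bigr)\;=\;\Bigl(\tfrac{\sigma}{S_m}-1\Bigr)\bigl(\overline{X}_i^\ast-\sqrt{n_i}\,G^{-1}(p)\bigr)+\sqrt{n_i}\bigl(G_m^{-1}(p)-G^{-1}(p)\bigr),
\]
the second summand is $O_P(\sqrt{n_i/m})=o_P(1)$ under the assumption $\max(n_1,n_2)/m=o(1)$. For the first summand, $\overline{X}_i^\ast=O_P(1)$ by the marginal CLT implicit in Proposition~\ref{t19:CLTDependent1}, and $G^{-1}(p)$ is a fixed constant, so the bracket is $O_P(\sqrt{n_i})$; multiplied by the $O_P(m^{-1/2})$ factor $\sigma/S_m-1$, this is again $O_P(\sqrt{n_i/m})=o_P(1)$. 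Applying this with $i=1$ and $i=2$ and stacking into the vector form gives the claim.

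\textbf{Main obstacle.} There is no serious analytic difficulty, and the dependent sampling scheme plays essentially no role in this expansion. The only step requiring genuine care is the control of $G_m^{-1}(p)-G^{-1}(p)$ under (ii), where one must jointly handle the randomness of $F_m^{-1}(p)$, $\overline{X}_0$ and $S_m$ that together define $G_m^{-1}(p)$; this is a standard delta-method argument but it is the one place where all three of the auxiliary CLTs are needed simultaneously. Everything else is algebra plus the balance condition $\max(n_1,n_2)/m=o(1)$, which is precisely tuned so that both sources of error – the Studentization by $S_m$ and the substitution of $G^{-1}(p)$ by $G_m^{-1}(p)$ – remain $o_P(1)$ after multiplication by $\sqrt{n_i}$.
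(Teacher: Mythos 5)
Your proposal is correct and follows essentially the same route as the paper: your single residual term $\bigl(\tfrac{\sigma}{S_m}-1\bigr)\bigl(\overline{X}_i^\ast-\sqrt{n_i}\,G^{-1}(p)\bigr)$ is exactly the sum $R_{ni}+V_{ni}$ in the paper's decomposition, and your substitution of $G_m^{-1}(p)$ for $G^{-1}(p)$ at rate $O_P(\sqrt{n_i/m})$ (with the four-way plug-in decomposition under Assumption Q') is the paper's remaining step. The observation that the expansion is purely coordinatewise, so that the dependence between stages is irrelevant here, is also implicit in the paper's argument.
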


The approximation of the OC curve $ OC_2(p) $ is now more involved. Recall at this point the well known fact that for a random vector $ (X,Y) $ that is bivariate normal with mean vector $ (\mu_X, \mu_Y)' $, variances $ \sigma_X^2 = \sigma_Y^2 = 1 $ and correlation $ \rho_{XY} $, the conditional distribution of, say, $ Y $ given $ X = z $ attains the Gaussian density
\[
  x \mapsto \frac{1}{\sqrt{2\pi (1-\rho^2)}} 
  \exp\left( 
    - \frac{ (x - \rho z)^2 }{2 \sqrt{1-\rho_{XY}^2}} \right), \qquad x \in \mathbb{R}.
\]

The following theorem now provides us with the required approximation of
the operating characteristic for the second stage sampling plan. 
It will be established in the following sense, slightly modified compared to the previous subsection: We say $ A $ approximates $ A_n $ and 
write  $ A_n \approx A $, if $ A_n = A + o_P(1) $, as $ \min(n_1,n_2) \to \infty $ with $ n_1/n_2 \to \lambda $, $ \max(n_1,n_2)/m = o(1) $ and $ n_1 \ge n \to \infty $.

\begin{theorem}
\label{t19:ApproximationDependent1}
Suppose that the above sampling scheme at stages 1 and 2 is applied and assume that one of the following assumptions is satisfied.
\begin{itemize}
  \item[(i)]  $ X_{01}, \dots, X_{0m} $ is an i.i.d. with common distribution function $ F_0(x) $  and Assumption Q holds.
  \item[(i)]  Assumption Q' is satisfied.
  \end{itemize}
  If, additionally, $ |\rho| \le \overline{\rho} < 1 $, then we have
\begin{equation}
  OC_2(p) \approx
  \frac{1}{\sqrt{2\pi}}
  \frac{ \int_{c_1 + \sqrt{n_1} G_m^{-1}(p) } 
  \left[ 1 - \Phi\left(  \frac{ c_2 - z + (\sqrt{n_1} + \sqrt{n_2}) G_m^{-1}(p) - \widehat{\rho} z }{ \sqrt{ 1 - \widehat{\rho}^2 } } \right) \right] e^{-z^2/2} \, dz }
  { 1 - \Phi( c_1 + \sqrt{n_1} G_m^{-1}(p) ) },
\end{equation}
where
\[
  \widehat{\rho} = \sqrt{\frac{n_1}{n_2}} \frac{ \widehat{\gamma} }{ \widehat{\sigma}_1 \widehat{\sigma}_2 } 
\ \ \text{with} \ \
  \widehat{\sigma}_j^2 = \frac{1}{n} \sum_{i=1}^n (X_{ji} - \overline{X}_i)^2,
  \ j = 1,2,
\]
and
\[
  \widehat{\gamma} = \frac1n \sum_{i=1}^n ( X_{1i} - \overline{X}_1)( X_{2i} - \overline{X}_2 ).
\]
\end{theorem}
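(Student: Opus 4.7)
The plan is to combine the joint expansion from Theorem~\ref{t19:ExpansionDependent1} with the bivariate central limit theorem of Proposition~\ref{t19:CLTDependent1} to rewrite $OC_2(p)$ in terms of the limiting bivariate normal law, and then substitute the consistent estimator $\widehat{\rho}$ for $\rho$. Decomposing
\[
  OC_2(p) = \frac{P(T_{n1} > c_1,\ T_{n1} + T_{n2} > c_2 \mid p)}{P(T_{n1} > c_1 \mid p)},
\]
I would treat the two probabilities separately, using in each case the substitution $T_{ni} = \overline{X}_i^* - \sqrt{n_i}\,G_m^{-1}(p) + o_P(1)$.

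For the denominator, the univariate marginal CLT $\overline{X}_1^* \stackrel{d}{\to} N(0,1)$ combined with the bound $\sqrt{n_1}(G_m^{-1}(p) - G^{-1}(p)) = \sqrt{n_1/m}\cdot O_P(1) = o_P(1)$ under Assumption~Q and $n_1/m = o(1)$ yields
\[
  P(T_{n1} > c_1 \mid p) = 1 - \Phi\bigl( c_1 + \sqrt{n_1}\,G_m^{-1}(p) \bigr) + o_P(1)
\]
by continuity of $\Phi$. For the numerator, I would condition on $\overline{X}_1^* = z$: Proposition~\ref{t19:CLTDependent1} ensures joint convergence to $N(0,\bfSigma)$, so the conditional limiting law of $\overline{X}_2^*$ given $\overline{X}_1^* = z$ is $N(\rho z, 1-\rho^2)$. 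On this event, the expansion rewrites $\{T_{n1} + T_{n2} > c_2\}$ as $\{\overline{X}_2^* > c_2 - z + (\sqrt{n_1} + \sqrt{n_2})\,G^{-1}(p)\}$, whose conditional probability equals
\[
  1 - \Phi\!\left( \frac{c_2 - z + (\sqrt{n_1} + \sqrt{n_2})\,G^{-1}(p) - \rho z}{\sqrt{1-\rho^2}} \right).
\]
Integrating against the marginal $N(0,1)$ density of $\overline{X}_1^*$ over $\{z > c_1 + \sqrt{n_1}\,G^{-1}(p)\}$ produces the numerator of the claimed expression, but with $\rho$ and $G^{-1}(p)$ in place of $\widehat{\rho}$ and $G_m^{-1}(p)$.

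It remains to substitute the estimators. Consistency $\widehat{\rho} \stackrel{P}{\to} \rho$ follows from the WLLN applied to the paired sample $(X_{1i}, X_{2i})$, which gives $\widehat{\gamma} \stackrel{P}{\to} \operatorname{Cov}(X_1, X_2)$ and $\widehat{\sigma}_j^2 \stackrel{P}{\to} \operatorname{Var}(X_j)$, together with $n_1/n_2 \to \lambda$ and the continuous mapping theorem. The main obstacle is bringing these $o_P(1)$ errors inside the integration: the assumption $|\rho| \le \overline{\rho} < 1$ keeps $\sqrt{1-\rho^2}$ bounded away from zero, so the integrand $z \mapsto 1 - \Phi(\cdot)$ is uniformly Lipschitz in its parameters on compact sets, while the Gaussian weight $e^{-z^2/2}$ supplies an integrable envelope. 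A truncation to a compact interval in $z$ combined with dominated convergence then allows replacing $G^{-1}(p)$ by $G_m^{-1}(p)$ and $\rho$ by $\widehat{\rho}$ at the cost of an $o_P(1)$ error, closing the proof.
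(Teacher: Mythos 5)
Your argument matches the paper's proof essentially step for step: the same numerator/denominator decomposition, the same reduction via the expansions of Theorem~\ref{t19:ExpansionDependent1} and the bivariate CLT of Proposition~\ref{t19:CLTDependent1} to a bivariate normal probability, the same conditional-Gaussian integral, and the same Lipschitz-in-$\rho$ argument (using $|\rho|\le\overline{\rho}<1$ together with the integrable envelope $|z|e^{-z^2/2}$) to justify substituting $\widehat{\rho}$ and $G_m^{-1}(p)$. The one step to tighten is the conditioning: joint weak convergence of $(\overline{X}_1^*,\overline{X}_2^*)$ does not by itself yield convergence of the conditional law of $\overline{X}_2^*$ given $\overline{X}_1^*=z$, so you should (as the paper does, via the Skorohod/Dudley/Wichura representation) first pass to the limiting bivariate normal pair $(Z_1,Z_2)$ and only then write that exact Gaussian probability as an integral of the conditional $N(\rho z,1-\rho^2)$ law --- your computation is correct once reordered in this way.
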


The above result deserves some discussion.

\begin{remark}
Observe that the unknown correlation coefficient is estimated from $n$ pairs
$ (X_{1i}, X_{2i}) $, $ i = 1, \dots, n $. Since the sampling plan $ (n_2, c_2 ) $
cannot be determined without an estimator $ \widehat{\rho} $, on should
fix $ n \le n_1 $ and remeasure $ n $ items at inspection time $ t_2 $, in order
to estimate $ \rho $.
\end{remark}

\begin{remark}
The fact that the approximation also holds true under the general probabilistic assumption Q' points to the fact that the results generalize the acceptance sampling methodology to the case of dependent sampling, for example when it is not feasible to draw randomly from the lot and instead one has to rely on consecutive produced items that are very likely to be stochastically dependent due to the nature of the production process.
\end{remark}

\begin{remark} The condition (\ref{RatioCondition}) can be easily ensured by
replacing $ n_2 $ by $ n_1/\lambda $, i.e. put $ n_2(\lambda) = n_1/\lambda $ and determining $ \lambda $ such that a valid sampling plan $ (n_2,c_2) $ results. However,
the procedure is not reformulated in this way for sake of clarity.
\end{remark}

\subsection{Sampling in spatial batches}

In photovoltaic quality control, it is quite common to sample in spatial batches.
Here one selects randomly a solar panel from the photovoltaic system, usually
arranged as a grid spread out over a relatively large area. Then the selected module
and $b-1$ neighbouring modules are measured on site. Of course, observations from neighbouring modules are correlated, since they share various factors that affect
variables relevant for quality and reliability. Among those are the frame on which they are installed, so that they share risk factors due to wrong installation, the local climate within the area (wind and its direction that leads to stress due to vibrations, see \citet{AssmusEtAl2011}), the wires as well as the inverter to which they are connected. Further, one cannot assume that during installation the modules are randomly spread over the area, so that their ordering may be the same as on the production line.

So let us assume that one substitutes $ n_i $ by $ \lceil n_i/b \rceil b $ and $ c_i $ by
the re-adjusted critical value (see step 6 of the algorithm in Section~\ref{t19:sec: comp}. Thus, we may and will assume
that 
\[
  n_i =  r_i b, \qquad i = 1, 2,
\]
where $b$ is the batch size and $ r_i $ the number of randomly selected batches.
Suppose that the observations are arranged such that 
\[ 
  (X_{i1}, \dots, X_{in_i}) = (X_1^{(1)}, \dots, X_b^{(1)}, \dots, X_1^{(r)}, \dots, X_b^{(r)} ),
\] 
where $ X_\ell^{(j)} $ is the $\ell$th observation from batch $j$, $ \ell = 1, \dots, b $, $ j = 1, \dots, r_i $.

Let us assume the following spatial-temporal model:
\[
  X_{i,(\ell-1)b+j} = \mu_i + B_\ell + \epsilon_{ij},
\]
for $ i =1, 2 $, $\ell = 1, \dots, r $ and $ j = 1, \dots, b $. Here
$ \{ \epsilon_{ij} : 1 \le j \le b, i = 1,2 \} $ are i.i.d. $(0, \sigma_\epsilon^2) $ error
terms, $ \{ B_\ell : \ell = 1, \dots, r \} $ are i.i.d. $(0,\sigma_B^2) $ random variables
representing the batch effect. It is assumed that $ \{ \epsilon_{ij} \} $ and $ \{ B_\ell \} $ are independent.

Then the covariance matrix of the random vector $ \mathbf{X}_i = ( X_{i1}, \dots, X_{1n_i}) $ is given by
\[
  \operatorname{Cov}(\mathbf{X}_i) 
  =  \bigoplus_{i=1}^{r_i} [  \sigma_B^2 \textbf{J}_b 
  + \sigma_\epsilon^2 \textbf{I}_b ],
\]
for $ i =1,2 $, where $ \textbf{J}_b $ denotes the $(b \times b)$-matrix with entries $1$
and $ \textbf{I}_b $ is the $b$-dimensional identity matrix.
Observing that
\[
  \operatorname{Cov}( \sqrt{n_1} \overline{X}_1, \sqrt{n_2} \overline{X}_2 )
  =
  \frac{S}{\sqrt{n_1} \sqrt{n_2}} 
\]
where $S$ is the sum of all elements of $ \operatorname{Cov}( \mathbf{X}_1 ) $,
we obtain
\[
  \operatorname{Cov}( \sqrt{n_1} \overline{X}_1, \sqrt{n_2} \overline{X}_2 )
  =
  \frac{ r_1b^2 \sigma_B^2 + r_1b \sigma_\epsilon^2}{ 
  \sqrt{r_1 r_2} b }
  = \sqrt{ \frac{r_1}{r_2} } b \sigma_B^2 + \sqrt{ \frac{r_1}{r_2} } \sigma_\epsilon^2.
\]
It can be shown that the method of proof used to show the above results extends to that 
spatial batch sampling, if one additionally assumes that $b$ is fixed and
\[
  \lim \frac{r_1}{r_2} = r^* > 0.
\]

\section{Computational aspects}\label{t19:sec: comp}

It is worth discussing some computational aspects. We confine ourselves to
the case of independent sampling, since the modifications for the dependent
case are then straightforward.

The calculation of the two-stage sampling plan is now as follows. At stage 1,
one solves the  equations
\[
  OC_1( AQL ) = 1-\alpha_1, \qquad OC_1( RQL ) = \beta_1, 
\]
leading to the explicit solutions
\begin{align}
\label{t19:N1_Formula}
  n_1= \biggl\lceil \frac{ (\Phi^{-1}( \alpha_1 ) - \Phi^{-1}(1-\beta_1) )^2 }{ (G_m^{-1}(AQL) - G_m^{-1}(RQL) )^2 } \biggr\rceil, \\
  \label{t19:C1_Formula}
  c_1 = - \frac{\sqrt{n_1}}{2} ( G_m^{-1}(AQL) + G_m^{-1}(RQL) ).
\end{align}
The sampling plan $(n_2,c_2)$ for stage 2 has to be determine such that
\[
  OC_2(AQL) = 1-\alpha_2, \qquad OC_2(RQL) = \beta_1,
\]
which is done by replacing $ OC_2 $ by its approximation, thus leading us to the 
nonlinear equations
\[
    \frac{1}{\sqrt{2\pi}} 
  \frac{ \int_{c_1+\sqrt{n_1}G_m^{-1}(AQL) } 
  \left[ 1 - \Phi( c_2 - z + (\sqrt{n_1} + \sqrt{n_2}) G_m^{-1}(AQL) ) \right] e^{-z^2/2} \, dz }
  { 1- \Phi( c_1 + \sqrt{n_1} G_m^{-1}(AQL) ) }
 = 1-\alpha_2 
\]
and 
\[
  \frac{1}{\sqrt{2\pi}} 
  \frac{ \int_{c_1+\sqrt{n_1}G_m^{-1}(RQL) } 
  \left[ 1 - \Phi( c_2 - z + (\sqrt{n_1} + \sqrt{n_2}) G_m^{-1}(RQL) ) \right] e^{-z^2/2} \, dz }
  { 1- \Phi( c_1 + \sqrt{n_1} G_m^{-1}(RQL) ) }
  = \beta_2,
\]
which have to be solved numerically. Notice that the integrals appearing at the left side also have to be calculated numerically.

In order to calculate the sampling plan $ (n_2, c_2) $, the following straightforward algorithm performed well and was used in the simulation study.

\vskip 0.5cm
\noindent
{\sc Algorithm:}

\begin{enumerate}
  \item Select $ \varepsilon > 0 $.
   \item Calculate $ (n_1, c_1) $ using (\ref{t19:N1_Formula}) and (\ref{t19:C1_Formula}).
   \item Perform a grid search minimization of the OC curve
  over $ (n,c) \in \{ (n',c') : c' = 1, \dots, c^*(n'),\ n' = 1, \dots, 200 \} $, 
  where $ c^*(n') = \min \{ 1 \le c'' \le 60  : (OC(AQL)-(1-\alpha_2))^2 + (OC(RQL)-\beta_2)^2 \le \varepsilon \} $ for given $n' $. Denote the grid-minimizer by $ (n^*,c^*) $.
  \item Use the grid-minimizer $ (n^*, c^*) $ as a starting value for numerically
  solving the nonlinear equations up to an error bound $ \varepsilon $ for the
  sum of squared deviations from the target. Denote the minimizer by $ (n_2^*, c_2^*)$.
  \item Put $ n_2 = \lceil n_2 \rceil $.
  \item For fixed $n = n_2 $ minimize numerically the nonlinear equations with respect to $ c_2 $ up to an error bound  $ \varepsilon $ for the
  sum of squared deviations from the target. Denote the minimizer by $ c_2^* $.
  \item Output $ (n_2, c_2) = (n_2, c_2^*) $.
\end{enumerate}

It turned out that the combination of a grid search to obtain starting values and a 
two-pass successive invocation of a numerical optimizer to minimize with respect to the sample size and the control limit in the first stage and, after rounding up the sample size, minimizing with respect to the control limit results in a stable algorithm.

\section{Simulations}\label{t19:sec: sims}

The simulation study has been conducted, in order to get some insights into the final sample statistical properties of the procedures. It was designed to mimic certain distributional settings that are of relevance in photovoltaic quality control. 

It is known from previous studies that the standard deviation of the estimated sample size is often quite high even when a large data set $ X_{01}, \dots, X_{0m} $ can be used to estimate it, see \citet{MeisenPepelyshevSteland2012}, \citet{GolyandinaPepelyshevSteland2012} and \citet{PepelyshevRafajlowiczSteland2013}.
The question arises how accurately the second stage sampling plan can be estimated, having in mind that the estimated first stage sample size affects the operating characteristic at the second stage.

For the simulations the following parameters were used: $ \alpha = \beta = 0.1 $ (global error probabilities), $AQL = 2\% $ and $ RQL = 5\%$.  The error probabilities $ \alpha_1 = \beta_1 $ for the first stage acceptance sampling procedure were selected from the set $ \{ 0.03, 0.05, 0.07 \} $ and the corresponding value $ \alpha_2 = 1-(1-\alpha)/(1-\alpha_1) $ was then calculated for the second stage inspection, cf. our discussion in Section~\ref{t19:sec: method}. The sample size $ m $ of the additional sample from the production line was chosen as $ 250 $ and $ 500 $.

Data sets according to the following models were simulated:
		\begin{eqnarray}
			&\text{Model 1:}&\;\;X_0\sim F^1 = N(220,4), \nonumber \\ 
			&\text{Model 2:}&\;\;X_0\sim F^2 = 0.9N(220,4)+0.1N(230,8).\nonumber \\
			&\text{Model 3:}&\;\;X_0\sim F^3 = 0.2N(200,4)+0.6N(220,4)+0.2N(230,8).\nonumber\\
			&\text{Model 4:}&\;\;X_0\sim F^4 = 0.2N(212,4)+0.6N(220,8)+0.2N(228,6).\nonumber
		\end{eqnarray}

The required quantiles for methods based on the kernel density estimator for the construction of the sampling plans were estimated by numerically inverting an integrated kernel density estimator $ \widehat{f}_m(x) $ calculated from the standardized sample $ X_{01}^*, \dots, X_{0m}^* $. The following methods of quantile estimation were used, where the first four approaches employ the kernel estimator with different bandwidth selectors:
 \begin{enumerate}
 \item Biased cross-validated (BCV) bandwidth.
 \item Sheather-Johnson bandwidth selection (SJ), \citet{SheatherJones1991}.
 \item Golyandina-Pepeyshev-Steland method (GPS), \citet{GolyandinaPepelyshevSteland2012}.
 \item Indirect cross-validation (ICV), \citet{SavchukEtAl2010}.
 \item Bernstein-Durrmeyer polynomial (BDP) quantile estimator, \cite{PepelyshevRafajlowiczSteland2013}.
\end{enumerate}

The following tables summarize the simulation results. Each case was simulated using 10,000 repetitions.

Table~\ref{t19:TableModel1} provides results for normally distributed measurements with mean $220$ and variance $ 4 $. The results show that even for such small sample sizes as $ 250 $ and $ 500 $, respectively, the second-stage sampling plan $ (n_2, c_2) $ can be estimated with comparable accuracy as the first-stage plan.  Further,
it can be seen that the GPS bandwidth selector provides on average the smallest sampling plan numbers $ n_2 $ and the highest accuracy.

For Model 2, a mixture model where for 10\% of the items the mean is reduced by 10 units, the situation is now different. Here biased cross-validation and indirect cross-validation perform best and produce the most accurate estimates, see Table~\ref{t19:TableModel2}. Again, the stage-two plan can be estimated with comparable accuracy. 

Model 3 represents a symmetric distribution with two smaller subpopulations whose mean is larger or smaller, such that there are notable local minima of the density between the three corresponding local maxima. The results are given in Table~\ref{t19:TableModel3}. Whereas for Models 1, 2 and 4 the GDP method leads to larger expected sample sizes and larger standard deviations than the other methods, it outperforms all other methods under Model 3, when $ m = 250 $.

Of considerable interest in photovoltaic applications, and presumable other areas as well, is Model 4, a kind of head-and-shoulders distribution resulting in relatively short tails.
The results in Table~\ref{t19:TableModel4} demonstrate that in this case the GPS method
provides the best results in all cases, both in the sense of smallest expected sample sizes for both stages and in the sense of highest accuracy of estimation (i.e. smallest standard deviations). 

	\begin{table}[h]
	\caption{Characteristics of the sampling plans for Model 1}
	\label{t19:TableModel1}
	\begin{tabular}{p{0.8cm}p{1cm}p{0.6cm}p{1cm}p{0.9cm}p{0.9cm}p{0.8cm}p{0.8cm}p{0.8cm}p{0.8cm}p{0.8cm}p{0.8cm}}
\hline\noalign{\smallskip}
 $\alpha_1$ & $\alpha_2$ & $m$ & Type & $E(n_1)$ & $sd(n_1)$ & $c_1$ & $sd(c_1)$ & $E(n_2)$ & $sd(n_2)$ & $c_2$ & $sd(c_2)$ \\ \hline

 3\% & 7.22\% & 250 & BCV & 79.76 & 22.47 & 17.39 & 2.14 & 18.33 & 8.63 & 26.30 & 3.34 \\ 
        &  & 250 & SJ & 82.13 & 25.42 & 17.43 & 2.38 & 19.97 & 9.97 & 26.41 & 3.42 \\ 
       &   & 250 & GPS & 78.92 & 21.82 & 17.37 & 2.11 & 17.68 & 8.29 & 26.23 & 3.33 \\ 
       &  & 250 & ICV & 80.10 & 22.89 & 17.40 & 2.17 & 18.50 & 8.81 & 26.31 & 3.34 \\ 
       &  & 250 & BDP & 90.58 & 34.08 & 16.91 & 2.64 & 26.97 & 13.95 & 26.28 & 3.29 \\ 
 7\% & 3.23\% & 250 & BCV & 49.29 & 13.84 & 13.67 & 1.68 & 22.35 & 8.46 & 23.71 & 3.07 \\ 
         &             & 250 & SJ & 50.76 & 15.67 & 13.71 & 1.86 & 23.85 & 9.60 & 23.93 & 3.38 \\ 
    &  & 250 & GPS & 48.78 & 13.43 & 13.66 & 1.65 & 21.80 & 8.22 & 23.61 & 2.99 \\ 
    &  & 250 & ICV & 49.50 & 14.09 & 13.68 & 1.70 & 22.52 & 8.62 & 23.73 & 3.11 \\ 
   &  & 250 & BDP & 55.98 & 21.01 & 13.29 & 2.07 & 31.04 & 13.83 & 23.99 & 3.76 \\ 
   3\% & 7.22\% & 500 & BCV & 80.21 & 17.99 & 17.26 & 1.72 & 19.11 & 6.86 & 26.55 & 2.94 \\ 
    &  & 500 & SJ & 81.60 & 19.51 & 17.27 & 1.84 & 20.09 & 7.52 & 26.66 & 3.06 \\ 
    &  & 500 & GPS & 79.49 & 17.42 & 17.25 & 1.67 & 18.50 & 6.64 & 26.44 & 2.88 \\ 
    &  & 500 & ICV & 80.36 & 18.22 & 17.26 & 1.73 & 19.18 & 6.98 & 26.56 & 2.97 \\ 
    &  & 500 & BDP & 93.90 & 24.27 & 17.43 & 1.95 & 27.32 & 9.90 & 27.38 & 2.83 \\ 
   7\% & 3.23\% & 500 & BCV & 49.58 & 11.07 & 13.57 & 1.34 & 23.17 & 7.05 & 23.68 & 2.52 \\ 
   &  & 500 & SJ & 50.45 & 12.01 & 13.58 & 1.44 & 24.09 & 7.57 & 23.81 & 2.69 \\ 
  &  & 500 & GPS & 49.15 & 10.72 & 13.56 & 1.31 & 22.62 & 6.84 & 23.60 & 2.44 \\ 
  &  & 500 & ICV & 49.67 & 11.21 & 13.57 & 1.35 & 23.27 & 7.19 & 23.70 & 2.54 \\ 
  &  & 500 & BDP & 58.01 & 14.93 & 13.70 & 1.53 & 31.97 & 10.12 & 24.70 & 2.89 \\ 
  
\noalign{\smallskip}\hline\noalign{\smallskip}
	\end{tabular}
	\end{table}
	
	\begin{table}[h]
	\caption{Characteristics of the sampling plans for Model 2}
	\label{t19:TableModel2}
	\begin{tabular}{p{0.8cm}p{1cm}p{0.6cm}p{1cm}p{0.9cm}p{0.9cm}p{0.8cm}p{0.8cm}p{0.8cm}p{0.8cm}p{0.8cm}p{0.8cm}}
\hline\noalign{\smallskip}
 $\alpha_1$ & $\alpha_2$ & $m$ & Type & $E(n_1)$ & $sd(n_1)$ & $c_1$ & $sd(c_1)$ & $E(n_2)$ & $sd(n_2)$ & $c_2$ & $sd(c_2)$ \\  \hline

 3\% & 7.22\% & 250 & BCV & 281.52 & 88.60 & 21.80 & 2.45 & 115.68 & 46.15 & 29.34 & 2.28 \\ 
  &  & 250 & SJ & 296.12 & 94.82 & 22.07 & 2.60 & 126.44 & 45.39 & 29.92 & 2.71 \\ 
   &  & 250 & GPS & 297.60 & 97.68 & 22.06 & 2.63 & 128.48 & 48.30 & 30.38 & 2.82 \\ 
   &  & 250 & ICV & 274.44 & 83.41 & 21.67 & 2.33 & 111.32 & 39.88 & 30.22 & 2.50 \\ 
    &  & 250 & BDP & 320.28 & 123.21 & 21.35 & 3.20 & 136.48 & 57.84 & 29.55 & 1.97 \\ 
   7\% & 3.23\% & 250 & BCV & 173.44 & 54.63 & 17.11 & 1.93 & 110.88 & 38.94 & 31.64 & 3.62 \\ 
    &  & 250 & SJ & 182.56 & 58.42 & 17.33 & 2.04 & 118.28 & 38.59 & 32.22 & 3.66 \\ 
    &  & 250 & GPS & 183.32 & 60.02 & 17.32 & 2.06 & 122.88 & 42.15 & 32.34 & 3.74 \\ 
    &  & 250 & ICV & 169.20 & 51.33 & 17.01 & 1.83 & 108.20 & 33.66 & 31.42 & 3.30 \\ 
    &  & 250 & BDP & 197.52 & 75.82 & 16.77 & 2.51 & 129.72 & 50.85 & 31.20 & 4.59 \\ 
   3\% & 7.22\% & 500 & BCV & 280.24 & 56.48 & 21.94 & 2.04 & 116.00 & 26.61 & 28.84 & 2.82 \\ 
    &  & 500 & SJ & 289.00 & 61.28 & 22.14 & 2.19 & 122.44 & 30.35 & 28.62 & 2.08 \\ 
    &  & 500 & GPS & 283.32 & 62.64 & 22.02 & 2.14 & 118.40 & 30.42 & 28.49 & 2.09 \\ 
    &  & 500 & ICV & 276.88 & 53.26 & 21.86 & 1.99 & 114.52 & 23.33 & 28.37 & 2.13 \\ 
    &  & 500 & BDP & 331.44 & 91.83 & 22.40 & 2.73 & 138.20 & 42.49 & 30.36 & 3.29 \\ 
   7\% & 3.23\% & 500 & BCV & 172.84 & 34.83 & 17.23 & 1.60 & 110.52 & 21.92 & 31.89 & 2.96 \\ 
    &  & 500 & SJ & 178.12 & 37.56 & 17.38 & 1.71 & 115.16 & 25.58 & 32.14 & 3.19 \\ 
    &  & 500 & GPS & 174.68 & 38.62 & 17.29 & 1.68 & 113.04 & 29.44 & 31.68 & 2.84 \\ 
    &  & 500 & ICV & 170.68 & 32.78 & 17.16 & 1.56 & 108.96 & 20.69 & 31.78 & 2.89 \\ 
    &  & 500 & BDP & 204.36 & 56.50 & 17.59 & 2.14 & 133.32 & 35.27 & 32.30 & 3.34 \\ 
\noalign{\smallskip}\hline\noalign{\smallskip}
	\end{tabular}
	\end{table}

	\begin{table}[h]
	\caption{Characteristics of the sampling plans for Model 3}
	\label{t19:TableModel3}
	\begin{tabular}{p{0.8cm}p{1cm}p{0.6cm}p{1cm}p{0.9cm}p{0.9cm}p{0.8cm}p{0.8cm}p{0.8cm}p{0.8cm}p{0.8cm}p{0.8cm}}
\hline\noalign{\smallskip}
 $\alpha_1$ & $\alpha_2$ & $m$ & Type & $E(n_1)$ & $sd(n_1)$ & $c_1$ & $sd(c_1)$ & $E(n_2)$ & $sd(n_2)$ & $c_2$ & $sd(c_2)$ \\ \hline

 3\% & 7.22\% & 250 & BCV & 206.12 & 78.11 & 26.99 & 4.46 & 75.92 & 33.60 & 29.05 & 2.57 \\ 
    &  & 250 & SJ & 210.84 & 78.68 & 27.26 & 4.55 & 77.44 & 34.14 & 30.39 & 2.96 \\ 
    &  & 250 & GPS & 203.44 & 75.73 & 26.86 & 4.26 & 72.64 & 33.43 & 29.36 & 2.99 \\ 
    &  & 250 & ICV & 202.96 & 76.32 & 26.83 & 4.36 & 74.12 & 33.60 & 29.57 & 3.01 \\ 
    &  & 250 & BDP & 171.00 & 66.69 & 23.82 & 3.70 & 58.76 & 29.80 & 29.15 & 2.07 \\ 
   7\% & 3.23\% & 250 & BCV & 127.16 & 48.05 & 21.20 & 3.50 & 73.36 & 35.32 & 33.94 & 2.66 \\ 
    &  & 250 & SJ & 129.96 & 48.50 & 21.40 & 3.57 & 75.60 & 35.86 & 33.73 & 2.48 \\ 
    &  & 250 & GPS & 125.36 & 46.58 & 21.09 & 3.33 & 70.88 & 34.31 & 33.98 & 2.63 \\ 
    &  & 250 & ICV & 125.12 & 47.01 & 21.07 & 3.42 & 72.44 & 36.28 & 34.05 & 2.31 \\ 
    &  & 250 & BDP & 105.64 & 41.11 & 18.73 & 2.90 & 58.64 & 28.75 & 32.19 & 3.20 \\ 
   3\% & 7.22\% & 500 & BCV & 190.80 & 58.17 & 26.28 & 3.54 & 68.00 & 25.37 & 28.45 & 2.04 \\ 
    &  & 500 & SJ & 191.68 & 60.03 & 26.30 & 3.67 & 67.92 & 25.75 & 29.17 & 2.57 \\ 
    &  & 500 & GPS & 188.96 & 56.97 & 26.20 & 3.44 & 66.52 & 24.81 & 28.79 & 1.74 \\ 
    &  & 500 & ICV & 190.68 & 57.17 & 26.30 & 3.48 & 67.80 & 25.28 & 28.74 & 1.92 \\ 
    &  & 500 & BDP & 194.84 & 49.04 & 25.89 & 2.67 & 71.80 & 22.06 & 28.73 & 1.62 \\ 
   7\% & 3.23\% & 500 & BCV & 117.64 & 35.78 & 20.64 & 2.77 & 65.76 & 27.16 & 33.69 & 2.20 \\ 
    &  & 500 & SJ & 118.40 & 36.91 & 20.67 & 2.87 & 65.12 & 27.68 & 33.44 & 2.44 \\ 
    &  & 500 & GPS & 116.48 & 34.94 & 20.57 & 2.69 & 65.36 & 26.14 & 33.62 & 2.11 \\ 
    &  & 500 & ICV & 117.56 & 35.21 & 20.65 & 2.72 & 66.32 & 26.38 & 33.79 & 2.23 \\ 
    &  & 500 & BDP & 120.12 & 30.31 & 20.33 & 2.10 & 67.88 & 22.75 & 34.64 & 2.05 \\ 

\noalign{\smallskip}\hline\noalign{\smallskip}
	\end{tabular}
	\end{table}

	\begin{table}[h]
	\caption{Characteristics of the sampling plans for Model 4}
	\label{t19:TableModel4}
	\begin{tabular}{p{0.8cm}p{1cm}p{0.6cm}p{1cm}p{0.9cm}p{0.9cm}p{0.8cm}p{0.8cm}p{0.8cm}p{0.8cm}p{0.8cm}p{0.8cm}}
\hline\noalign{\smallskip}
 $\alpha_1$ & $\alpha_2$ & $m$ & Type & $E(n_1)$ & $sd(n_1)$ & $c_1$ & $sd(c_1)$ & $E(n_2)$ & $sd(n_2)$ & $c_2$ & $sd(c_2)$ \\ \hline

 3\% & 7.22\% & 250 & BCV & 171.76 & 49.49 & 24.52 & 2.58 & 59.79 & 23.18 & 28.55 & 1.85 \\ 
   &  & 250 & SJ & 230.81 & 65.47 & 27.29 & 3.28 & 87.28 & 29.17 & 29.45 & 1.91 \\ 
    &  & 250 & GPS & 173.57 & 46.82 & 24.62 & 2.49 & 60.70 & 22.25 & 28.57 & 1.74 \\ 
    &  & 250 & ICV & 173.32 & 44.49 & 24.61 & 2.37 & 60.56 & 21.12 & 28.47 & 1.71 \\ 
    &  & 250 & BDP & 251.94 & 80.19 & 26.98 & 3.67 & 96.91 & 34.33 & 29.63 & 2.04 \\ 
   7\% & 3.23\% & 250 & BCV & 105.94 & 30.47 & 19.26 & 2.03 & 58.85 & 24.02 & 33.12 & 2.37 \\ 
    &  & 250 & SJ & 142.30 & 40.30 & 21.43 & 2.57 & 87.94 & 32.09 & 33.85 & 2.33 \\ 
    &  & 250 & GPS & 107.05 & 28.84 & 19.33 & 1.96 & 59.57 & 22.68 & 33.32 & 2.63 \\ 
    &  & 250 & ICV & 106.91 & 27.38 & 19.33 & 1.86 & 59.49 & 21.82 & 33.39 & 2.43 \\ 
    &  & 250 & BDP & 155.31 & 49.36 & 21.19 & 2.88 & 100.03 & 39.10 & 33.95 & 2.47 \\ 
   3\% & 7.22\% & 500 & BCV & 231.72 & 52.48 & 27.43 & 2.58 & 88.00 & 23.35 & 29.27 & 1.61 \\ 
    &  & 500 & SJ & 254.06 & 55.76 & 28.41 & 2.72 & 97.89 & 24.41 & 29.85 & 1.72 \\ 
    &  & 500 & GPS & 209.60 & 43.00 & 26.43 & 2.20 & 78.13 & 19.35 & 28.69 & 1.36 \\ 
    &  & 500 & ICV & 224.87 & 49.94 & 27.13 & 2.48 & 84.83 & 22.18 & 29.07 & 1.52 \\ 
    &  & 500 & BDP & 295.24 & 72.06 & 29.34 & 3.10 & 115.42 & 30.61 & 30.47 & 2.01 \\ 
   7\% & 3.23\% & 500 & BCV & 142.85 & 32.31 & 21.54 & 2.02 & 88.49 & 26.36 & 34.24 & 2.23 \\ 
    &  & 500 & SJ & 156.60 & 34.33 & 22.31 & 2.13 & 99.64 & 27.51 & 33.92 & 2.13 \\ 
    &  & 500 & GPS & 129.24 & 26.48 & 20.76 & 1.73 & 77.04 & 21.69 & 34.58 & 2.21 \\ 
    &  & 500 & ICV & 138.64 & 30.75 & 21.30 & 1.95 & 84.79 & 25.11 & 34.38 & 2.25 \\ 
    &  & 500 & BDP & 181.98 & 44.37 & 23.04 & 2.43 & 120.50 & 35.80 & 33.86 & 2.09 \\ 

\noalign{\smallskip}\hline\noalign{\smallskip}
	\end{tabular}
	\end{table}

\section{Discussion}

A sampling plan methodology for a control-inspection policy is established that allows for independent as well as dependent sampling. Relying on a decision rule based on a $t$-type test statistic, sampling plans are constructed based on quantile estimates calculated from an additional sample taken from the production line. The new methodology applies to independent samples as well as dependent ones, under general
conditions.
When aggregating the available sampling information in order to minimize the required additional sampling costs at inspection time, it turns out that the relevant operating characteristics are relatively involved nonlinear equations that have to be solved numerically. Monte-Carlo simulations show that the approach works well and that the second stage sampling plan can be estimated with an accuracy that is comparable to the accuracy for the known formulas applicable for the first stage sampling plan. It also turns out that there is no uniformly superior method of bandwidth selection when relying on quantile estimates using inverted kernel density estimators. However, ICV as well as the GPS bandwidth selectors provide better results in many cases than more classical approaches. 

The extension of the acceptance sampling methodology to the case of $ L \ge 2 $ number of inspection time points, preferably allowing for dependent cluster sampling, requires further investigation. Firstly, the question arises whether or not one should design such procedures such that the overall type I and type II error rates are under control. Further, it remains an open issue to which extent one should aggregate data and to which extent time effects can be modelled stochastically. Lastly, for large $L$ appropriate procedures could resemble sequential (closed-end) procedures. 

Having in mind that in many cases present day quality control is based on highdimensional data arising from measurement curves and images such as IV curves or EL images in photovoltaics, the extension of the acceptance sampling methodology to  highdimensional and functional data deserves future research efforts as well; a deaper discussion is beyond the scope of the present article.


\section*{Acknowledgments}

The author thanks M.Sc. Andreas Sommer and M.Sc. Evgenii Sovetkin for proof-reading. Part of this work has been supported by a grant from the German Federal Ministry of the Environment, Nature Conservation and Nuclear Safety (grant no. 0325588B).

\section*{Appendix: Proofs}
\addcontentsline{toc}{section}{Appendix}

The results are obtained by refinements of the results obtained
in \citet{StelandZaehle2009} and \citet{MeisenPepelyshevSteland2012} and their extension to the two-stage setup with possible dependent samples. First, we need the two following auxiliary results, which are proved in \citet{MeisenPepelyshevSteland2012} for independent observations. However, it can be easily seen that the proofs work under more general conditions.

\begin{lemma} 
\label{t19:AuxLemma1}
If $ X_1, X_2, \dots $ have mean $\mu$, variance $ \sigma^2 \in (0, \infty) $ and satisfy a central limit theorem, i.e.
$
  \sqrt{n} \frac{ \overline{X}_n - \mu }{ \sigma} \stackrel{d}{\to} N(0,1),
$
as $n \to \infty $,  then
\[
  R_n = \sqrt{n} \frac{ \overline{X}_n - \mu }{ \sigma}  \frac{\sigma - S_m}{S_m}
  = o_P(1),
\]
as $ \min(n,m) \to \infty $, if $ S_m $ is a weakly consistent estimator for $ \sigma $.
\end{lemma}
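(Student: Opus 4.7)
The plan is to recognize $R_n$ as a product of a stochastically bounded factor and a factor that converges to zero in probability, and then invoke Slutsky-type arguments.

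First, I would observe that the CLT hypothesis $\sqrt{n}(\overline{X}_n - \mu)/\sigma \stackrel{d}{\to} N(0,1)$ immediately yields tightness of the sequence $\sqrt{n}(\overline{X}_n - \mu)/\sigma$, hence $\sqrt{n}(\overline{X}_n - \mu)/\sigma = O_P(1)$ as $n \to \infty$. This handles the first factor of $R_n$.

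Next, I would address the factor $(\sigma - S_m)/S_m$. By hypothesis, $S_m \stackrel{P}{\to} \sigma$ as $m \to \infty$, and since $\sigma^2 \in (0,\infty)$ we have $\sigma > 0$. Applying the continuous mapping theorem to the map $x \mapsto 1/x$ on a neighborhood of $\sigma$ bounded away from zero, together with $\sigma - S_m \stackrel{P}{\to} 0$, gives $(\sigma - S_m)/S_m \stackrel{P}{\to} 0$, i.e. it is $o_P(1)$ as $m \to \infty$.

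Finally, since $\min(n,m) \to \infty$ forces both $n \to \infty$ and $m \to \infty$, the two factors are respectively $O_P(1)$ and $o_P(1)$ in the joint limit, and by the elementary product rule $O_P(1) \cdot o_P(1) = o_P(1)$ we conclude $R_n = o_P(1)$. There is no real obstacle here; the only subtlety worth flagging is that the two factors depend on different index sets ($n$ and $m$), so one needs the joint limit $\min(n,m) \to \infty$ rather than a sequential one, but this is exactly the hypothesis of the lemma. The generality claim in the preceding remark (that the argument does not require independence) is reflected in the fact that the only probabilistic inputs used are the CLT for $\overline{X}_n$ and the weak consistency of $S_m$, both assumed directly.
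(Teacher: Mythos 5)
Your proof is correct and is exactly the standard Slutsky-type argument intended here: the paper itself does not reproduce a proof but defers to \citet{MeisenPepelyshevSteland2012}, noting only that the argument carries over beyond the independent case, and your observation that the only probabilistic inputs are the CLT (giving $O_P(1)$ for the first factor) and weak consistency of $S_m$ (giving $o_P(1)$ for the second, since $\sigma>0$) is precisely why that remark holds. The handling of the two separate indices via the joint limit $\min(n,m)\to\infty$ and the union-bound nature of the $O_P(1)\cdot o_P(1)=o_P(1)$ rule is also correctly flagged.
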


\begin{lemma} 
\label{t19:AuxLemma2}
Suppose that
\begin{equation}
\label{t19:JointDConv}
  \left( \begin{array}{cc} \sqrt{m}( F_m^{-1}(p) - F^{-1}(p) ) \\
    \sqrt{n} \frac{\overline{X}_n - \mu }{ \sigma } 
    \end{array} \right)
   \stackrel{d}{\to} 
   \left( \begin{array}{cc} V_1 \\ V_2 \end{array} \right)
\end{equation}
as $ m \to \infty $, for a pair $ (V_1,V_2)' $ of random variables. Then
\[
  V_n = \sqrt{ \frac{n}{m} } \frac{ \sqrt{m}( F_m^{-1}(p) - F^{-1}(p) ) }{ S_m }
  = o_P(1),
\]
as $ \min(n,m) \to \infty $ such that $ n/m = o(1) $.
\end{lemma}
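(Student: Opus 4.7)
The statement is essentially a Slutsky-type observation, so the plan is to decompose $V_n$ into three factors and control each of them separately. Writing
\begin{equation*}
V_n \;=\; \sqrt{\tfrac{n}{m}}\cdot \bigl[\sqrt{m}(F_m^{-1}(p)-F^{-1}(p))\bigr]\cdot \tfrac{1}{S_m},
\end{equation*}
the goal is to show that this product is a deterministic null sequence times an $O_P(1)$ term times an $O_P(1)$ term.

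First, I would note that the assumed joint convergence in (\ref{t19:JointDConv}) implies, in particular, the marginal convergence $\sqrt{m}(F_m^{-1}(p)-F^{-1}(p))\stackrel{d}{\to} V_1$, and weak convergence implies stochastic boundedness (tightness), so this factor is $O_P(1)$ as $m\to\infty$. Second, under the standing assumption that $X_{01},\dots,X_{0m}$ are i.i.d.\ with finite second moment (in fact finite fourth moment), the sample standard deviation $S_m$ converges in probability to $\sigma>0$ by the law of large numbers applied to the sample mean and sample second moment, combined with the continuous mapping theorem for $x\mapsto\sqrt{x}$. Since the limit $\sigma$ is strictly positive, another application of the continuous mapping theorem with $x\mapsto 1/x$ (continuous at $\sigma$) yields $1/S_m \stackrel{P}{\to} 1/\sigma$; in particular $1/S_m=O_P(1)$. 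Third, the assumption $n/m=o(1)$ gives $\sqrt{n/m}\to 0$ as a deterministic null sequence.

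Combining the three pieces via the usual $o(1)\cdot O_P(1)\cdot O_P(1)=o_P(1)$ calculus (which is immediate from the definition of $O_P$ and $o_P$), one concludes $V_n=o_P(1)$ as $\min(n,m)\to\infty$ with $n/m=o(1)$, which is the claim.

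\textbf{Main obstacle.} There is essentially no hard step: the argument is a packaging of tightness, consistency of $S_m$, and the rate assumption. The only point where one must be a little careful is the consistency of $S_m$ and the positivity of its limit, so that inversion is legitimate; this however follows routinely from the moment assumption on $X_0$ made at the outset of the paper. Note also that the joint convergence hypothesis (\ref{t19:JointDConv}) is stronger than needed for this particular lemma — only the marginal tightness of $\sqrt{m}(F_m^{-1}(p)-F^{-1}(p))$ is used — but it is stated in joint form because it is what gets fed into the subsequent expansion arguments for the test statistics.
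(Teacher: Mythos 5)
Your proof is correct and is the standard Slutsky-type argument; the paper itself does not spell out a proof of this lemma but defers to the cited reference (Meisen, Pepelyshev and Steland, 2012), where the same decomposition — a deterministic null sequence $\sqrt{n/m}$ times a tight sequence $\sqrt{m}(F_m^{-1}(p)-F^{-1}(p))=O_P(1)$ times $1/S_m=O_P(1)$ — is used. Your remarks that only the marginal tightness of the quantile-estimator coordinate is actually needed, and that the consistency of $S_m$ together with $\sigma>0$ must be imported from the paper's standing i.i.d.\ finite-moment assumptions on the time-$t_0$ sample, are both accurate.
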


\begin{proof} (\textit{Theorem~\ref{t19:ApproxOCIndependent}})\\
In order to establish the approximations, first notice that the well known Skorohod/Dudley/Wichura representation theorem allows us to assume that all
distributional convergences can be assumed to hold a.s. and that all $ o_P(1) $ terms
are $ o(1) $; we leave the details to the reader. In particular, we may and shall assume that, almost surely,
\begin{equation}
\label{t19:ASConv}
  ( \overline{X}_1^*, \overline{X}_2^*)' \to (Z_1, Z_2)
  \quad \Leftrightarrow \quad
  \overline{X}_1^* - Z_1 = o(1), \ \overline{X}_2^* - Z_2 = o(1),
\end{equation}
 as $ \min(n_1, n_2) \to \infty $, where $ (Z_1, Z_2) $ are i.i.d standard normal random variables. Let us consider the probability
$
  q = P( T_{n1} >c_1, T_{n1} + T_{n2} > c_2 ).
$
As shown in detail below in the proof of Theorem~\ref{t19:ExpansionDependent1} for the more involved case of dependent sampling, we have the asymptotic expansions
\[
  \left( \begin{array}{cc} T_{n1} \\ T_{n2} \end{array} \right)
  =
    \left( 
      \begin{array}{cc} \overline{X}_1^* \\ \overline{X}_2^* \end{array}
    \right)
  -
  \left( \begin{array}{cc} \sqrt{n_1} G_m^{-1}(p) \\ \sqrt{n_2} G_m^{-1}(p) \end{array} \right)
  + o_P(1),
\]
as $ \min(n_1,n_2) \to \infty $ with $ n_1/n_2 \to \infty $ and $ \max(n_1,n_2)/m = o(1) $, and both coordinates are independent given $ G_m^{-1}(p) $.
Combing these expansions with (\ref{t19:ASConv}), we obtain, by plugging in the above expansions  and $ (Z_1, Z_2) $ for $ 
( \overline{X}_1^*, \overline{X}_2^* ) $,
\begin{align*}
  q & =
    P( \overline{X}_1^* - \sqrt{n_1} G_m^{-1}(p) + o(1) > c_1,
      \overline{X}_1^* + \overline{X}_2^* - (\sqrt{n_1}+\sqrt{n_2}) G_m^{-1}(p)
      + o(1) > c_2 ) \\
 & =   P( Z_1 - \sqrt{n_1} G_m^{-1}(p) + o(1) > c_1,
      Z_1 + Z_2 - (\sqrt{n_1}+\sqrt{n_2}) G_m^{-1}(p)
      + o(1) > c_2 ) 
\end{align*}
Conditioning on $ Z_2 = z_2 $ and $ X_{01}, \dots, X_{0m} $ leads to the expression
\[
     \int P( z > c_1 + \sqrt{n_1}G_m^{-1}(p) + o(1), 
Z_2 > c_2 - z + (\sqrt{n_1}+\sqrt{n_2}) G_m^{-1}(p) + o(1) ) \, d \Phi(z)
\]
for $ q $. Using $ E( 1_A 1_B ) = 1_A E( 1_B ) $, if $ A $ is non-random with respect to $P$, we obtain
\begin{align*}
   q  
& = \int_{c_1 + \sqrt{n_1}G_m^{-1}(p) + o(1)}^\infty 
[ 1 - \Phi( c_2 - z + (\sqrt{n_1}+\sqrt{n_2}) G_m^{-1}(p) + o(1) ) ] \, d \Phi(z) + o(1) \\
& = \int_{c_1 + \sqrt{n_1}G_m^{-1}(p) + o(1)}^\infty 
[ 1 - \Phi( c_2 - z + (\sqrt{n_1}+\sqrt{n_2}) G_m^{-1}(p)) ] \, d \Phi(z) + o(1),
\end{align*}
where we used the continuity of the integral. Further, the $ o(1) $ term in the integrand can be dropped by virtue of the Lipschitz continuity of $ \Phi $.
Combing the above results with the approximation
$  P(T_{n1} > c_1 ) = 1 - \Phi( c_1 + \sqrt{n_1} G_m^{-1}(p) ) + o(1),
$
establishes the result.
$\hfill \Box$
\end{proof}

We are now in a position to show Theorem~\ref{t19:ExpansionDependent1}. 
If $ X_{01}, \dots, X_{0m} $ and $ X_{11}, \dots, X_{1n_1} $ are independent,
then (\ref{t19:JointDConv}) follows easily. Otherwise, Assumption Q' ensures
the validity of the joint asymptotic normality for independent as well as
a large class of dependent sampling schemes.

\begin{proof} (\textit{Theorem~\ref{t19:ExpansionDependent1}})\\
Recall that $ E( \overline{X}_i ) = \mu $ and $ \operatorname{Var}( \overline{X}_i ) = \sigma/n_i $, $ i = 1, 2 $.
 We may closely follow the arguments given in 
 \citet{MeisenPepelyshevSteland2012}, since we have
  \[
    T_{ni} = \sqrt{n_i} \frac{ \overline{X}_i - \tau }{ S_m }
    = \sqrt{n_i} \frac{ \overline{X}_i - \mu }{ \sigma } + R_{ni} + \sqrt{n_i} \frac{\mu - \tau}{\sigma} + V_{ni}, 
  \]
  where 
  \begin{align*}
    R_{ni} 
      &= \sqrt{n_i} \frac{ \overline{X}_n - \mu }{ \sigma}  \frac{\sigma - S_m}{S_m}
      = o_P(1), \\
    V_{ni} & = \sqrt{n_i} \frac{\mu - \tau}{ \sigma } \left( \frac{\sigma}{S_m} - 1 \right) = o_P(1),
  \end{align*}
  as $ \min(n_i,m) \to \infty $, by virtue of Lemma~\ref{t19:AuxLemma1}, since
  $ \sqrt{m}( S_m - \sigma ) $ is asymptotically normal (by an application of the $ \Delta $-method, if the fourth moment is finite, and $ n_i / m = o(1) $, also see \citet{StelandZaehle2009}. Thus, it remains to consider
  \begin{align*}
    \sqrt{n_i} \frac{ \mu - F^{-1}(p) }{ \sigma }
    & = - \sqrt{n_i} G^{-1}(p) 
    = \sqrt{\frac{n_i}{m}} \sqrt{m}[G_m^{-1}(p) - G^{-1}(p) ]  - \sqrt{n_i} G_m^{-1}(p),
  \end{align*}
  where, by virtue of Assumption Q, the first term is $ o_P(1) $, if 
  $ \min(m,n_i) \to \infty $ and $ n_i / m = o(1) $. This shows the first
  assertion which is relevant when a quantile estimator of the standardized
  observations is available. Recall that $ \mu_0 = \mu + \Delta = E(X_0) $ and
  $ \sigma_0^2 = \operatorname{Var}(X_0) = \sigma^2 $.
  If a quantile estimator $ F_m^{-1} $ for the quantile function 
  $
    F_0^{-1}( p ) = \mu_0 + \sigma_0 G^{-1}(p)
  $
  of the additional sample taken at time $ t_0 $ is available, one proceeds as follows. 
  Noting that   $ \frac{ \mu - F^{-1}(p) }{ \sigma } = G^{-1}(p) = \frac{\mu_0 - F_0^{-1}(p)}{\sigma_0} $, we have
  \begin{align*}
  \sqrt{n_i} \frac{ \mu - F^{-1}(p) }{ \sigma }
    & = \sqrt{ \frac{n_i}{m} } \frac{ \sqrt{m}[F_m^{-1}(p) - F_0^{-1}(p) ] }{ \sigma_0 }
    - \sqrt{n_i} \frac{ F_m^{-1}(p) - \overline{X}_0 }{ S_m } \\
    & \qquad
    - \sqrt{n_i} \frac{ F_m^{-1}(p) - \overline{X}_0 }{ S_m }
    \left( \frac{S_m}{\sigma_0} - 1 \right) 
    + \sqrt{m} \frac{ \mu_0 - \overline{X}_0 }{ \sigma_0 } \sqrt{ \frac{n_i}{m} }.
  \end{align*}
  In this decomposition at the right side the first, third and fourth term 
  are $ o_P(1) $, as $ \min(n_i,m) \to \infty $ with $n_i/m = o(1) $, $ i = 1, 2 $. 
  Notice that the fourth term is $ o(1) $, since 
  \[ 
    \sqrt{m}(\overline{X}_0 - \mu_0)/\sigma_0 \stackrel{d}{\to} N(0,1),
  \]
  if $ X_{01}, \dots, X_{0m} $ are i.i.d. $ \sim F((\bullet-\mu_0)/\sigma_0) $ or as a consequence of Assumption Q'. 
  Thus,
  \[
    \sqrt{n_i} \frac{ \mu - F^{-1}(p) }{ \sigma }
    = \sqrt{n_i} G_m^{-1}(p) + o_P(1),
  \]
  where now $ G_m^{-1}(p) = \frac{ F_m^{-1}(p) - \overline{X}_0 }{ S_m } $
  is an estimator of the quantile function $ G^{-1}(p) $ of the standardized observations,
  see Remark~\ref{t19:RemarkQuantEst}. $ \hfill \Box $
\end{proof}

\begin{proof} (\textit{Proposition~\ref{t19:CLTDependent1}})\\
We consider the case $n_1<n_2$. W.l.o.g. we can assume that $X_{21},\ldots,X_{2n_1}$ are the time $t_2$ measurements from those $ n_1$ items (modules) already drawn at time $t_1$, and $X_{2,n_1+1},\ldots,X_{2n_2}$ are $n_2-n_1$ measurements taken from newly selected items  from the lot. By virtue of the Cram\'{e}r-Wold device, to prove the proposition, it suffices to show that for all constants $d_1,d_2\in\mathbb{R} $ with $(d_1,d_2)\neq(0,0)$
\[
d_1\overline{X}_1^*+d_2\overline{X}_2^*\overset{d}{\underset{n\rightarrow\infty}{\rightarrow}}N(0,d_1^2+d_2^2+2d_1d_2\sqrt{\lambda}\varrho'),
\]
since
$E(d_1\overline{X}_1^*+d_2\overline{X}_2^*)=0 \text{ and}
$
\[Var (d_1\overline{X}_1^*+d_2\overline{X}_2^*)=d_1^2+d_2^2+2d_1d_2\sqrt{\frac{n_1}{n_2}}\varrho'.
\]
Write
$d_1\overline{X}_1^*+d_2\overline{X}_2^*=A_n+B_n, $
where
\begin{align*}
&A_n=\frac{1}{\sqrt{n}}\sum_{j=1}^{n_1}\left[d_1\frac{X_{1j}-\mu_1}{\sigma_1}+d_2\sqrt{\frac{n_1}{n_2}}\frac{X_{2j}-\mu_1}{\sigma_1}\right],\\
&B_n= \frac{d_2}{\sqrt{n_2}}\sqrt{n_2-n_1}\frac{1}{\sqrt{n_2-n_1}}\sum_{j=n_1+1}^{n_2}\frac{X_{2j}-\mu_2}{\sigma_2}.
\end{align*}
The summands of $A_n$ form an array of row-wise independent random variables $\xi_{n_1,j}, 1\leq j\leq n_1,n_1\geq 1$, with mean zero and variance
\[Var(\xi_{n1,j})= d_1^2+d_2^2\frac{n_1}{n_2}+2d_1d_2\sqrt{\frac{n_1}{n_2}}\varrho'\rightarrow d_1^2+d_2^2\lambda+2d_1d_2\sqrt{\lambda}\varrho',
\]
as $ n_1\rightarrow\infty.$
Further, it is easy to verify that
$B_n\stackrel{d}{\rightarrow}N(0,d_2^2(1-\lambda)), $
as $n_1\rightarrow\infty$. By independence of $A_n$ and $B_n$, we obtain
\[
\left(\begin{array}{c}A_n\\B_n\end{array}\right)\stackrel{d}{\rightarrow}N\left(\left(\begin{array}{c}0\\0\end{array}\right),\left(\begin{array}{cc}d_1^2+d_2^2\lambda+2d_1d_2\sqrt{\lambda}\varrho' & 0\\0& d_2^2(1-\lambda)\end{array}\right)\right),
\]
as $n_1\rightarrow\infty.$ Now the continuous mapping theorem entails
\[A_n+B_n \stackrel{d}{\rightarrow} N(0,\sigma_{AB}^2),
\]
as $n_1\rightarrow\infty$, where
$
\sigma_{AB}^2= d_1^2+d_2^2\lambda+2d_1d_2\sqrt{\lambda}\varrho'+d_2^2(1-\lambda)
=d_1^2+d_2^2\lambda+2d_1d_2\sqrt{\lambda}\varrho',
$
which establishes the assertion.
$\hfill\Box$
\end{proof}

\begin{proof} (\textit{Theorem~\ref{t19:ApproximationDependent1}})\\
The proof goes along the lines of the proof for the independent case.
Again we may and shall assume that the distributional convergence is
a.s. and $ o_P(1) $ are $ o(1) $ a.s. Therefore,
$
  \left( \overline{X}_1^*, \overline{X}_2^*\right)
  \stackrel{a.s.}{\to} \left(  Z_1,  Z_2 \right),
$
as $ \min(n_1,n_2) \to \infty $.  Here $ (Z_1,Z_2) $ is a bivariate random vector
that is jointly normal with mean $0$, unit variances and correlation $ \rho $.
The probability
$  q = P( T_{n1} >c_1, T_{n1} + T_{n2} > c_2 )
$ can now be calculated as follows. We have
\begin{align*}
  q & =
    P( \overline{X}_1^* - \sqrt{n_1} G_m^{-1}(p) + o(1) > c_1,
      \overline{X}_1^* + \overline{X}_2^* - (\sqrt{n_1}+\sqrt{n_2}) G_m^{-1}(p)
      + o(1) > c_2 ) \\
     & = 
     P( Z_1 > c_1 + \sqrt{n_1}G_m^{-1}(p) + o(1), 
    Z_2 > c_2 - z + (\sqrt{n_1}+\sqrt{n_2}) G_m^{-1}(p) + o(1) )  \\
  & = \int 1( z >c_1 + \sqrt{n_1} G_m^{-1}(p) + o(1)) \\ 
  & \qquad \qquad
  P( Z_2 > c_2 - z + (\sqrt{n_1}+\sqrt{n_2}) G_m^{-1}(p) + o(1) | Z_1=z) \, d \Phi(z) + o(1).
\end{align*}
However, now we have to take into account that the conditional law of $ Z_2 $ given $ Z_1=z $ is a normal distribution that depends on $ z $, namely with mean $ \rho z $ and variance $ 1-\rho^2 $. Therefore, we may conclude that, up to an $ o(1) $ term,
\begin{align*}
  q &= \frac{1}{\sqrt{2\pi}}
    \int_{c_1 + \sqrt{n_1} G_m^{-1}(p)}^\infty
    \left[ 1 - \Phi\left( \frac{ c - z + (\sqrt{n_1}+\sqrt{n_2}) G_m^{-1}(p) - \rho z }{ \sqrt{1-\rho^2} } \right) \right] e^{-z^2/2} \, dz.
\end{align*}
The unknown correlation parameter $ \rho $ may be replaced by its consistent
estimator $ \widehat{\rho} $, since the integrand is Lipschitz continuous, if
$ |\rho| \le \overline{\rho} < 1 $. Indeed, observing that
\begin{align*}
 & \frac{d}{d \rho} \Phi\left( \frac{ c - z + (\sqrt{n_1}+\sqrt{n_2}) G_m^{-1}(p) - \rho z }{ \sqrt{1-\rho^2} } \right)  \\
 & = \varphi_{(0,1)}\left( \frac{ c - z + (\sqrt{n_1}+\sqrt{n_2}) G_m^{-1}(p) - \rho z }{ \sqrt{1-\rho^2} } \right) \\
 & \cdot \frac{-z}{\sqrt{1-\rho^2}} + \rho
   \frac{ c - z + (\sqrt{n_1}+\sqrt{n_2}) G_m^{-1}(p) - \rho^2 }{ (\sqrt{1-\rho^2})^3 },
\end{align*}
where $ \varphi_{(0,1)} $ denotes the density of the $ N(0,1) $-distribution,
we can find $ 0 < c < \infty $, such that the above expression is not larger than
$c |z|  $, as a function of $z$. Hence, replacing $ \rho $ by its estimator $ \widehat{\rho}_n $ results in an error term that can be bounded by $ (2\pi)^{-1} c \int |z| e^{-z^2/2} \, dz | \widehat{\rho}_n - \rho | = o_P(1)$. 
Putting things together, we arrive at the assertion of the theorem.
$\hfill \Box$
\end{proof}

\end{document}